\newtheorem{thm}{Theorem}[section]
\theoremstyle{definition}
\newtheorem{conj}[thm]{Conjecture}
\newtheorem{cor}[thm]{Corollary}
\theoremstyle{definition}
\newtheorem{defn}[thm]{Definition}
\theoremstyle{definition}
\theoremstyle{remark}
\numberwithin{equation}{section}
\newcommand{\Z}{{\mathbb Z}}
\newcommand{\N}{{\mathbb N}}
\begin{document}

\title[Sarnak's conjecture for rank-one subshifts]{Sarnak's conjecture for rank-one subshifts}

\author[M. Etedadialiabadi]{Mahmood Etedadialiabadi}

\address{Department of Mathematics, University of North Texas, 1155 Union Circle \#311430, Denton, TX 76203, USA}

\email{mahmood.etedadialiabadi@unt.edu}

\author[S. Gao]{Su Gao}

\address{Department of Mathematics, University of North Texas, 1155 Union Circle \#311430, Denton, TX 76203, USA}\email{sgao@unt.edu}

\thanks{The second author's research was partially supported by the U.S. NSF grant DMS-1800323. The authors would like to thank Cesar Silva for discussions on the topic of Sarnak's conjecture.}

\subjclass[2010]{Primary 37A45; Secondary 37B20}

\keywords{M\"{o}bius disjoint, Sarnak's conjecture, rank-one subshift, odometer, Katok's map}

\begin{abstract}
Using techniques developed in \cite{KLR}, we verify Sarnak's conjecture for two classes of rank-one subshifts with unbounded cutting parameters. The first class of rank-one subshifts we consider are called {\em almost complete congruency classes} ({\em accc}), the definition of which is motivated by the main result of \cite{GS}, which implies that, when a rank-one subshift carries a unique non-atomic invariant probability measure, it is accc if it is measure-theoretically isomorphic to an odometer. The second class we consider consists of Katok's map and its generalizations.
\end{abstract}

\maketitle


\section{Introduction}
The M\"{o}bius function, $\mu:\mathbb{N}\rightarrow \{-1,0,1\}$, is defined such that: $\mu(n)=0$ if $n$ is divisible by $p^2$ for some prime number $p$; and $\mu(n)=(-1)^k$ if $n=p_1p_2\cdots p_k$ where $p_1,p_2,\dots,p_k$ are distinct prime numbers. The M\"{o}bius function is one of the most important functions in Number Theory, and in particular the study of the M\"{o}bius function is highly consequential in Analytical Number Theory. For instance, the fact that the respective numbers of $1$s and $-1$s as values of the M\"{o}bius function are almost the same is equivalent to the prime number theorem.
\begin{thm}[Landau; see \cite{Apo} \S4.9]\label{PNT}
The statement that $\sum_{n\leq N} \mu(n)=o(N)$ is equivalent to the prime number theorem (PNT).
\end{thm}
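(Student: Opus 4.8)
The plan is to reduce both statements to a single assertion about Chebyshev's function and then exploit the Dirichlet-convolution identities linking the M\"obius function $\mu$ to the von Mangoldt function $\Lambda$. First I would invoke the classical Chebyshev equivalences: by partial summation, PNT in the form $\pi(x)\sim x/\log x$ is equivalent to $\psi(x)\sim x$, where $\psi(x)=\sum_{n\le x}\Lambda(n)$. It therefore suffices to prove
\[ \psi(x)\sim x \iff M(x):=\sum_{n\le x}\mu(n)=o(x). \]

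The bridge between the two sides is the identity $\Lambda=\mu*\log$, i.e.\ $\Lambda(n)=\sum_{d\mid n}\mu(d)\log(n/d)$, which follows from $\mathbf 1*\mu=\delta$ and $\mathbf 1*\Lambda=\log$. Summing over $n\le x$ and writing $n=dm$ yields the key formula $\psi(x)=\sum_{d\le x}\mu(d)\,T(x/d)$, where $T(y)=\sum_{m\le y}\log m=y\log y-y+O(\log(y+2))$ by Stirling. Alongside it I would record the elementary bound $\big|\sum_{n\le x}\mu(n)/n\big|\le 1$ (from $\sum_{d\le x}\mu(d)\lfloor x/d\rfloor=1$) and the Abel-summation identity $M(x)=x\,m(x)-\int_1^x m(t)\,dt$, where $m(x)=\sum_{n\le x}\mu(n)/n$.

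With these in hand, one implication is soft: if $m(x)\to 0$, then the Abel identity immediately gives $M(x)=o(x)$, since $x\,m(x)=o(x)$ and $\int_1^x m(t)\,dt$ is a Ces\`aro average of a null function. The content lies in connecting $m(x)\to0$ (equivalently $M(x)=o(x)$) to $\psi(x)\sim x$ via $\psi(x)=\sum_{d\le x}\mu(d)T(x/d)$: substituting Stirling separates a main term, assembled from the partial sums $\sum_{d\le x}\mu(d)/d$ and $\sum_{d\le x}\mu(d)\log d/d$, from a remainder, and one must show the remainder is $o(x)$ while the main term tends to $x$.

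The hard part is precisely this remainder estimate. Substituting Stirling naively into $\psi(x)=\sum_{d\le x}\mu(d)T(x/d)$ leaves an error of size $O\big(\sum_{d\le x}\log(2x/d)\big)=O(x)$, the same order as the quantity being estimated, so it cannot simply be discarded; moreover the observation that an abstract $M(x)=o(x)$ need not force $\int_1^x M(t)t^{-2}\,dt$ to converge shows that no purely formal manipulation can succeed. Extracting the genuine cancellation of $\mu$ can be done in two standard ways. The analytic route uses $\sum\mu(n)n^{-s}=1/\zeta(s)$ and $\sum\Lambda(n)n^{-s}=-\zeta'(s)/\zeta(s)$ for $\Re s>1$ to identify \emph{both} statements with the non-vanishing $\zeta(1+it)\ne0$ for all real $t$, via a Wiener--Ikehara-type Tauberian theorem; the elementary route feeds the identity back into itself (Selberg's symmetry formula) to beat the $O(x)$ barrier. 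Either way the Tauberian/non-vanishing step is the crux, and everything else is partial summation and bookkeeping.
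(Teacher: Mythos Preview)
The paper does not prove this theorem at all; it is quoted as classical background with a pointer to Apostol \S4.9, so there is no in-paper argument to compare your proposal against.

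That said, your sketch has a structural issue worth flagging. You correctly set up the bridge via $\psi(x)\sim x$ and the convolution identity $\Lambda=\mu*\log$, and you correctly locate the obstacle: the naive Stirling substitution into $\psi(x)=\sum_{d\le x}\mu(d)T(x/d)$ leaves an $O(x)$ remainder. But your proposed ways out---Wiener--Ikehara plus $\zeta(1+it)\ne0$, or Selberg's symmetry formula---are the tools used to \emph{prove} PNT, not to prove the \emph{equivalence}. Invoking them here is circular or at best vastly overkill: you would be establishing ``$A\iff B$'' by proving $A$ and $B$ separately. The content of Landau's theorem (and of Apostol's treatment) is precisely that the equivalence is strictly shallower than PNT itself. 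The $O(x)$ remainder is tamed by an elementary Tauberian device (a form of Axer's lemma / generalized Dirichlet hyperbola argument): assuming one side, say $M(x)=o(x)$, one splits the sum at a parameter and uses the hypothesis on the long range together with the trivial bound on the short range to force the remainder to be $o(x)$. No zero-free region and no Selberg identity are needed. Your outline would benefit from replacing the last paragraph with this elementary splitting argument.
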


 Furthermore, the Riemann hypothesis can be restated in terms of the rate of cancellation in $\sum_{n\leq N} \mu(n)$.
\begin{thm}[Littlewood, 1912; see \cite{T}]
The Riemann hypothesis is equivalent to the statement that for every $\epsilon>0$ we have $\sum_{n\leq N} \mu(n)=o(N^{1/2+\epsilon})$.
\end{thm}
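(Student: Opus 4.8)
The plan is to build the equivalence on the analytic bridge between the summatory M\"{o}bius function $M(N)=\sum_{n\le N}\mu(n)$ and the reciprocal of the Riemann zeta function. The starting point is the identity
\[
\frac{1}{\zeta(s)}=\sum_{n=1}^{\infty}\frac{\mu(n)}{n^{s}}=s\int_{1}^{\infty}\frac{M(x)}{x^{s+1}}\,dx,
\]
valid for $\Re(s)>1$, where the second equality is Abel (partial) summation together with the trivial bound $M(x)=O(x)$. I would then prove the two implications separately, as the ``easy'' and ``hard'' halves of the statement.

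For the direction that the cancellation bound implies the Riemann hypothesis, suppose $M(N)=o(N^{1/2+\epsilon})$ for every $\epsilon>0$, so that $|M(x)|\ll_{\epsilon} x^{1/2+\epsilon}$. For a fixed $s$ with $\Re(s)=\sigma>1/2$, choosing any $\epsilon<\sigma-\tfrac12$ gives $|M(x)/x^{s+1}|\ll x^{-1-(\sigma-1/2-\epsilon)}$, so the integral $\int_{1}^{\infty}M(x)x^{-s-1}\,dx$ converges locally uniformly on the half-plane $\Re(s)>\tfrac12$ and defines a holomorphic function there. Since this function agrees with $1/\zeta(s)$ on $\Re(s)>1$, analytic continuation shows $1/\zeta$ extends holomorphically to $\Re(s)>\tfrac12$; in particular $\zeta$ has no zero in that half-plane. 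The functional equation, which reflects the nontrivial zeros across the critical line, then forces all of them onto $\Re(s)=\tfrac12$, which is exactly RH.

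For the converse, assume RH and aim for $M(N)=O(N^{1/2+\epsilon})$, which is equivalent to the stated $o$-bound after shrinking $\epsilon$. The tool is the truncated Perron formula
\[
M(x)=\frac{1}{2\pi i}\int_{c-iT}^{c+iT}\frac{x^{s}}{s\,\zeta(s)}\,ds+(\text{error}),
\]
with $c=1+1/\log x$, followed by shifting the contour leftward to the line $\Re(s)=\tfrac12+\delta$. Under RH the integrand $x^{s}/(s\zeta(s))$ is holomorphic throughout the strip $\tfrac12+\delta<\Re(s)<c$ (no zero of $\zeta$, no pole at $s=0$ since we stay right of the imaginary axis, and a zero rather than a pole at $s=1$), so no residues are collected and $M(x)$ is controlled by the integral over the shifted vertical line together with the two horizontal segments.

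The main obstacle is precisely this last estimate: to make the shifted integral converge and be of size $x^{1/2+\epsilon}$, I need sub-polynomial growth bounds of the form $1/\zeta(\sigma+it)\ll_{\delta}|t|^{\epsilon}$ for $\sigma\ge\tfrac12+\delta$. Establishing these under RH is the technical heart of the argument; it proceeds through estimates for $\log\zeta(s)$ in the critical strip obtained via the Borel--Carath\'{e}odory and Hadamard three-circles theorems, exploiting that RH confines all zeros to $\Re(s)=\tfrac12$. Granting such bounds, a careful choice of the truncation height $T$ and of $\delta$ in terms of $x$ and $\epsilon$ yields $M(x)\ll x^{1/2+\epsilon}$, closing the equivalence.
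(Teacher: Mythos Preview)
The paper does not prove this theorem: it is quoted in the introduction as classical background and attributed to Littlewood with a reference to Titchmarsh \cite{T}, so there is no ``paper's own proof'' to compare against. Your sketch is the standard argument one finds in that reference: the easy direction via the Mellin--Abel representation of $1/\zeta(s)$ and analytic continuation, and the hard direction via Perron's formula, a contour shift to $\Re(s)=\tfrac12+\delta$, and the Littlewood-type bound $1/\zeta(\sigma+it)\ll |t|^{\epsilon}$ for $\sigma\ge\tfrac12+\delta$ under RH (proved through Borel--Carath\'{e}odory/Hadamard three-circles applied to $\log\zeta$). That outline is correct and is exactly the route taken in Titchmarsh, Chapter~14; the only substantive work you have flagged but not carried out is the growth bound for $1/\zeta$ near the critical line, which is indeed the crux.
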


In this paper we concentrate on the study of the random behavior of the M\"{o}bius function and not necessarily the speed of the cancellation. One of the strongest conjectures on the random nature of the sequence $\{\mu(n)\}_{n\in \mathbb{N}}$ is due to Chowla.
\begin{conj}[Chowla]
Let $0\leq i_1, i_2, \cdots, i_k\leq 2$ be a sequence of integers with at least one taking value 1. Then
\[
\sum_{n\leq N} \mu(n+1)^{i_1}\mu(n+2)^{i_2}\cdots \mu(n+k)^{i_k}=o(N).
\]
\end{conj}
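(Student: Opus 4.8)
The plan is to reduce the general $k$-point correlation to correlations involving only genuine factors of $\mu$, and then attack those by the analytic-number-theoretic machinery built around multiplicative functions in short intervals. Since $\mu(m)^0=1$ and $\mu(m)^2$ is simply the indicator that $m$ is squarefree, each summand $\mu(n+1)^{i_1}\cdots\mu(n+k)^{i_k}$ factors as a bounded, almost-periodic ``squarefree-pattern'' weight times a shorter correlation $\prod_{j:\,i_j=1}\mu(n+j)$. The squarefree weight has a clean average (governed by $6/\pi^2$ and an inclusion–exclusion over small square moduli), so after sieving out the squarefull obstructions I would be left with proving $\sum_{n\le N}\mu(n+h_1)\cdots\mu(n+h_r)=o(N)$, where $h_1<\cdots<h_r$ are the shifts carrying exponent $1$ and $r\ge 1$ by the hypothesis that at least one $i_j=1$.

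For these pure correlations the natural strategy is the two-pronged approach that has driven all recent progress. First I would invoke the Matomäki–Radziwiłł theorem, which shows that a bounded multiplicative function has essentially the same average over a typical short interval $[x,x+H]$ as over $[x,2x]$; this converts the global cancellation question into a short-interval statement, in which the fixed shifts $h_j$ become negligible. Second, I would run Tao's entropy decrement argument: introducing an auxiliary prime $p$ and exploiting the approximate multiplicativity $\mu(pn)\approx -\mu(n)$ (exact on the squarefree part when $p\nmid n$), one gains an extra averaging over dilations which, combined with the short-interval input, forces the correlation to decay. Here the case $r=1$ is merely the prime number theorem in the guise of Theorem \ref{PNT}, so the genuine content starts at $r=2$.

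The hard part—and the reason the statement is a conjecture rather than a theorem—is that the entropy decrement method yields cancellation only after \emph{logarithmic} averaging, i.e.\ it controls $\sum_{n\le N}\frac{1}{n}\mu(n+h_1)\cdots\mu(n+h_r)$ rather than the Cesàro sum $\sum_{n\le N}\mu(n+h_1)\cdots\mu(n+h_r)$ demanded here. Removing the logarithmic weights would require excluding conspiracies concentrated on sparse dyadic scales, for which no unconditional technique is known. A second, independent obstacle is parity: sieve and circle-method inputs are insensitive to the sign of $\mu$, so detecting the genuine $\pm1$ oscillation in the higher-order correlations (in particular even $r\ge 2$) needs an ingredient that breaks parity, exactly the regime that remains open even in the logarithmically averaged setting. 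I therefore would not expect to close this argument; the realistic deliverables are the two-point case and the logarithmically averaged version, with the conjecture as stated remaining a central open problem whose resolution is not implied by any result available in this excerpt.
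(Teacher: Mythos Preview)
The statement you were asked to address is stated in the paper as a \emph{conjecture}, not a theorem: the paper offers no proof of it and does not claim one. Your write-up is therefore not in conflict with the paper; you correctly identify Chowla's conjecture as open, outline the Matom\"aki--Radziwi\l\l\ and Tao entropy-decrement machinery that yields the logarithmically averaged variant, and explain why the Ces\`aro-averaged version remains out of reach. There is no proof in the paper to compare against, and your assessment that the conjecture ``remains a central open problem'' is exactly the paper's stance as well.
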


Chowla's conjecture seems out of reach for the moment and a weaker notion (see \cite{AKPLR}, Theorem 4.10; \cite{Ta}) of pseudorandomness for the M\"{o}bius function, Sarnak's conjecture, is the main focus of the present work. In an attempt to formalize the random behavior of M\"{o}bius function using tools from Dynamical Systems, Sarnak suggested the following conjecture.

\begin{conj}[Sarnak]
Let $X$ be a compact metric space and $T:X\rightarrow X$ be a homeomorphism. If the topological dynamical system $(X,T)$ is of entropy zero, then we have
\[
\sum_{n\leq N} f(T^nx)\mu(n)=o(N)
\]
for every continuous function $f:X\rightarrow \mathbb{R}$ and $x\in X$.
\end{conj}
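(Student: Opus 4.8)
Sarnak's conjecture in full generality is a major open problem, so I cannot propose a complete proof; what is realistic---and what the present paper in fact carries out---is to verify the estimate for restricted families of zero-entropy systems, here the rank-one subshifts advertised in the abstract. I note first that the entropy-zero hypothesis is essential: for positive-entropy systems the desired cancellation can fail, so any workable strategy must exploit the subexponential orbit complexity that zero entropy provides. The goal is thus the \emph{M\"obius disjointness} estimate $\sum_{n\le N} f(T^n x)\mu(n)=o(N)$, and it must be obtained uniformly enough to hold for \emph{every} point $x\in X$ and every continuous $f$, not merely almost everywhere.

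The central tool I would use is the K\'atai--Bourgain--Sarnak--Ziegler orthogonality criterion. Fixing a continuous $f$ and a point $x$, set $a_n=f(T^n x)$, a bounded sequence. The criterion reduces $\sum_{n\le N}a_n\mu(n)=o(N)$ to controlling the correlations of $(a_n)$ along coprime dilations: it suffices that
\[
\limsup_{N\to\infty}\frac1N\Bigl|\sum_{n\le N} a_{pn}\overline{a_{qn}}\Bigr|
\]
be small, on average, over pairs of distinct primes $p,q$. In dynamical terms this asks that $\frac1N\sum_{n\le N} f(T^{pn}x)\overline{f(T^{qn}x)}$ decay, so the number-theoretic problem is converted into a purely dynamical correlation estimate for $(X,T)$.

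To obtain those correlation estimates I would exploit the combinatorial structure of rank-one subshifts. After approximating $f$ by a function depending on finitely many coordinates, the cutting-and-spacer description organizes long orbit segments into nearly periodic blocks whose lengths are governed by the cutting parameters. For the \emph{accc} class I would import \cite{GS}: under a unique non-atomic invariant measure such a subshift is measure-theoretically isomorphic to an odometer, and odometers---being rotations on procyclic groups, hence of discrete spectrum---are already known to be M\"obius disjoint; the task is then to transport this disjointness through the structural correspondence using the techniques of \cite{KLR}. Katok's map and its generalizations lie outside the odometer picture and would be handled by a separate, construction-specific analysis of the same correlation sums.

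The main obstacle I anticipate is the gap between the measure-theoretic and the topological demands. The isomorphism to an odometer, together with the spectral information it supplies, holds only almost everywhere with respect to the invariant measure, whereas the conjecture requires cancellation at every $x$. Closing this gap---making the correlation bounds pointwise and uniform, while simultaneously controlling the \emph{unbounded} cutting parameters that block any naive reduction to a bounded-complexity argument---is where the real work lies, and is precisely the point at which the machinery of \cite{KLR} must be brought to bear.
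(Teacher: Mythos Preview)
The statement is an open conjecture, so neither you nor the paper proves it in general; both restrict to the rank-one classes announced in the abstract. Your proposed route to those special cases, however, diverges substantially from the paper's and carries a genuine gap.

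You take the K\'atai--Bourgain--Sarnak--Ziegler criterion as the central tool, reducing to correlation sums along coprime dilations, and for the accc class you plan to invoke the odometer isomorphism from \cite{GS} and transport M\"obius disjointness from the odometer side. The paper does neither. Its sole number-theoretic input is Theorem~\ref{KLRadziwill} (the Kanigowski--Lema\'nczyk--Radziwi\l\l\ short-interval estimate for $\mu$ along arithmetic progressions), not KBSZ. And it never uses any measure-theoretic isomorphism: Stone--Weierstrass reduces matters to showing that each set $M'_x=\{n:n+n_1,\dots,n+n_l\in M_x\}$ is orthogonal to $\mu$ (Theorem~\ref{multiplication}), and this is proved directly from the combinatorial building-block definition of accc by chopping $[1,N]$ into translates of a single building block on which $M'_x$ is, up to density $O(\epsilon)$, a union of residue classes mod $k$---whereupon Theorem~\ref{KLRadziwill} applies. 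The same template handles the generalized Katok maps, with the moduli $q_{n,\ell}=|v_n|+t_{n,\ell+1}$ replacing $k$.

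Your route has an obstruction beyond the measure-versus-topology gap you already flag. The accc condition is \emph{strictly weaker} than ``isomorphic to an odometer'': it is extracted only from Clause~(b) with $l=0$ of Theorem~\ref{Getal}, not the full characterization, and it is imposed without the summability hypothesis~(\ref{CC}) that guarantees a unique non-atomic invariant measure. So for a general accc subshift there need be no odometer to transport from, and your argument would not cover the class the paper actually treats. The paper's purely combinatorial approach sidesteps this entirely: it needs only that each $M_x$ admits arbitrarily long building blocks $\epsilon$-close to a union of residue classes, which is exactly what accc supplies, pointwise and with no measure theory.
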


Following \cite{KLR}, we say that $(X,T)$ is {\em M\"{o}bius disjoint} if
\[
\sum_{n\leq N} f(T^nx)\mu(n)=o(N)
\]
for every continuous function $f:X\rightarrow \mathbb{R}$ and $x\in X$. Furthermore, we say a continuous function $f:X\rightarrow \mathbb{R}$ satisfies {\it Sarnak's property} if
\[
\sum_{n\leq N} f(T^nx)\mu(n)=o(N)
\]
for every $x\in X$.

Sarnak's conjecture has been studied extensively in recent years (see, for example, \cite{ALR}, \cite{Bourgain}, \cite{FKPL}, \cite{FM}, \cite{HLSY}, \cite{KLR}, and \cite{KLR2}). In particular, the study of the conjecture for symbolic systems corresponding to the class of rank-one transformations is of interest.

Given sequences of positive integers $r_n>1$ for $n\in\mathbb{N}$ and nonnegative integers $s_{n,i}$ for $n\in\mathbb{N}$ and $0<i\leq r_n$, define a {\em generating sequence} $v_n$ of finite words recursively by setting $v_0=0$ and
\begin{equation}\label{GG} v_{n+1}=v_n1^{s_{n,1}}v_n1^{s_{n,2}}\cdots v_n1^{s_{n, r_n}} \end{equation}
for $n\in \mathbb{N}$. An {\em infinite rank-one word} $V\in 2^{\mathbb{N}}$ is then defined as $V=\lim_{n\to\infty} v_n$ and the {\em rank-one subshift} $(X_V, T)$ is given by
$$ X_V=\{ x\in 2^{\mathbb{Z}}\,:\, \mbox{every finite subword of $x$ is a subword of $V$}\} $$
and $T(x)(a)=x(a+1)$ for all $x\in X_V$ and $a\in \mathbb{Z}$. The sequences $(r_n)_{n\in\mathbb{N}}$ and $(s_{n,i})_{n\in\mathbb{N}, 0< i\leq r_n}$ are known as, respectively, the {\em cutting parameter} and the {\em spacer parameter} of the rank-one subshift. A rank-one subshift $(X_V, T)$ is {\em nontrivial} if $X_V$ is infinite, or equivalently, $V$ is aperiodic. In this paper we only consider nontrivial rank-one subshifts. Note that a rank-one subshift is always of topological entropy zero. $(X_V, T)$ is {\em bounded} if there is $M>0$ such that $r_n<M$ and $s_{n,i}<M$ for all $n\in\mathbb{N}$ and $0<i\leq r_n$.

Bourgain \cite{Bourgain} proved Sarnak's conjecture for bounded rank-one subshifts for the special case that $s_{n,r_n}=0$ for all $n\in\mathbb{N}$. This was extended to all bounded rank-one subshifts by El Abdalaoui--Lema\'{n}czyk--de la Rue \cite{ALR}.

\begin{thm}[Bourgain \cite{Bourgain}; El Abdalaoui--Lema\'{n}czyk--de la Rue \cite{ALR}]
Let $(X,T)$ be a bounded rank-one subshift. Then $(X,T)$ is M\"{o}bius disjoint.
\end{thm}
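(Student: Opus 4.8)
The plan is to establish M\"{o}bius disjointness through the K\'{a}tai--Bourgain--Sarnak--Ziegler (KBSZ) orthogonality criterion, which reduces the problem to controlling correlations of the observable sequence along distinct prime dilations. Recall the criterion: if $(a_n)_{n\in\N}$ is a bounded sequence of complex numbers for which
\[
\limsup_{N\to\infty}\frac1N\left|\sum_{n\le N} a_{pn}\overline{a_{qn}}\right|
\]
tends to $0$ as the distinct primes $p,q\to\infty$, then $\sum_{n\le N} a_n\mu(n)=o(N)$. Applying this with $a_n=f(T^nx)$, it suffices to bound, for a fixed point $x\in X$ and a fixed continuous $f$, the prime-dilated correlations $\frac1N\sum_{n\le N} f(T^{pn}x)\overline{f(T^{qn}x)}$. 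Since $X\subseteq 2^{\Z}$ is zero-dimensional, every continuous $f$ is a uniform limit of functions depending on only finitely many coordinates, so I would first reduce to the case where $f$ is a cylinder approximation $f_k$ determined by a window $x\upharpoonright[-k,k]$; the uniform approximation passes through the $o(N)$ estimate with an error controlled by $\|f-f_k\|_\infty$.

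Next I would exploit the hierarchical structure of the rank-one word. The generating sequence (\ref{GG}) presents $X$ via cutting and stacking: at stage $n$ there is a tower built from the word $v_n$, of height $h_n=|v_n|$, and any $x\in X$ determines, for each $n$, its position within the occurrence of $v_n$ covering the origin (away from a small set of spacer positions). Boundedness of $(r_n)$ and $(s_{n,i})$ is exactly what keeps $h_{n+1}/h_n$ comparable to a bounded constant, so the tower heights grow at a controlled geometric rate and the proportion of the orbit sitting in spacer positions stays uniformly small. I would fix a scale $n$ with $h_n$ of the right size relative to $N$ and decompose $\{1,\dots,N\}$ according to which copy of $v_n$ inside $v_{n+1}$ (and the higher blocks) the indices $pn$ and $qn$ land in.

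The crux is then to show that, for distinct large primes $p\ne q$, the pair of positions of $T^{pn}x$ and $T^{qn}x$ within the stage-$n$ tower decouples on average, so that the correlation $\frac1N\sum_{n\le N} f(T^{pn}x)\overline{f(T^{qn}x)}$ factors approximately as $\bigl(\frac1N\sum f(T^{pn}x)\bigr)\bigl(\frac1N\sum \overline{f(T^{qn}x)}\bigr)$ up to a vanishing error, and that this product can be made small as $p,q\to\infty$. The mechanism is arithmetic: because $\gcd(p,q)=1$ and both primes are large, the progressions $\{pn\}$ and $\{qn\}$ visit the residues governing the tower positions in an essentially independent fashion, and the rigidity of a rank-one system pushes the joint distribution toward a product. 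In the language of \cite{ALR} this is the assertion that bounded rank-one systems enjoy asymptotic orthogonality of powers, established through an analysis of the ergodic self-joinings of $(X,T)$; Bourgain's original argument in \cite{Bourgain} instead carries out the decorrelation by direct exponential-sum estimates under the simplifying hypothesis $s_{n,r_n}=0$.

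I expect the main obstacle to be precisely this decorrelation estimate: controlling the interaction between the multiplicative dilation by $p$ and $q$ and the additive, hierarchical block structure of $v_n$. The difficulty is compounded by the spacer positions, which break the exact periodicity of the tower and must be shown to contribute negligibly, and by the need to calibrate the cutoff scale $n=n(N)$ so that the stage-$n$ tower is neither too coarse nor too fine relative to $N$. Once the prime-dilated correlations are shown to vanish uniformly as $p,q\to\infty$, the KBSZ criterion yields $\sum_{n\le N} f(T^nx)\mu(n)=o(N)$ for cylinder $f$, and the approximation step of the first paragraph extends this to all continuous $f$ and all $x\in X$, giving M\"{o}bius disjointness.
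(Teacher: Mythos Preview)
The paper does not prove this theorem; it is quoted from the literature as prior work (Bourgain \cite{Bourgain} for the case $s_{n,r_n}=0$, and El Abdalaoui--Lema\'{n}czyk--de la Rue \cite{ALR} in general), so there is no proof in the paper to compare your proposal against. Your outline does track the strategy of \cite{ALR}: reduce to cylinder functions, invoke the KBSZ criterion, and verify the asymptotic orthogonality of powers (AOP) property via a joinings analysis to control the correlations $\tfrac1N\sum_{n\le N} f(T^{pn}x)\overline{f(T^{qn}x)}$ for large distinct primes $p,q$.

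That said, your sketch is not yet a proof, and the place where it would fail without substantial further input is the decorrelation step. Your heuristic that the progressions $\{pn\}$ and $\{qn\}$ ``visit the residues governing the tower positions in an essentially independent fashion'' is not, on its own, enough: there is no single residue class modulus governing tower positions in a rank-one system, and the interaction of the multiplicative dilations with the hierarchical block structure is genuinely delicate. In \cite{ALR} this is handled not by a direct combinatorial argument of the kind you describe but by establishing a structural result about ergodic joinings of $T^p$ and $T^q$ (that they are relatively independent over the maximal common factor), which is a measure-theoretic fact requiring its own nontrivial proof for bounded rank-one systems. Your proposal correctly names this as the crux and correctly attributes the two existing approaches, but the paragraph explaining the ``mechanism'' does not supply one; it would need either the joinings machinery of \cite{ALR} or the exponential-sum estimates of \cite{Bourgain} to be completed.
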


In this paper we consider two classes of rank-one subshifts with unbounded cutting parameters. The consideration of the first class is motivated by the main result of \cite{GS}. In that context the authors assumed that the generating sequence satisfies the condition
\begin{equation}\label{CC} \displaystyle\sum_{n=0}^\infty \frac{|v_{n+1}|-r_n|v_n|}{|v_{n+1}|}<+\infty,
\end{equation}
which guarantees that $(X_V, T)$ admits a unique non-atomic invariant probability measure $\mu$. The main result of \cite{GS} is a characterization of when the measure-preserving transformation $(X_V, \mu, T)$ is measure-theoretically isomorphic to an odometer. To state this characterization we need to make the following definition. For $n\geq m$, apply (\ref{GG}) inductively to write $v_n$ uniquely in the form
$$ v_n=v_m 1^{b_1} v_m 1^{b_2}\cdots v_m1^{b_t} $$
and let $I_{m,n}$ be the set of indices for the starting positions of the copies of $v_m$ (starting at index $0$ for the starting position of the first copy). Note that $I_{0,n}$ is the set of positions of all $0$s in $v_m$.

\begin{thm}[Foreman--Gao--Hill--Silva--Weiss \cite{GS}]\label{Getal} The rank-one measure-preserving transformation $(X_V, \mu, T)$ is measure-theoretically isomorphic to an odometer if and only if for all $l \in \N$ and all $\epsilon>0$, there is some $k \in \N$ such that for all $\eta >0$ there exists an $N \in \N$ such that for all $n > m \geq N$,
\begin{enumerate}
\item[\rm (a)]  there is some $j \in \Z / k\Z$ such that $$\frac{|\{i \in I_{m,n}: [i]_k \neq j\}|}{|I_{m,n}|} < \eta, \mbox{ and}$$
\item[\rm (b)]  there is some $D \subseteq \Z / k\Z$ such that $$\frac{|\{i \leq |v_m|: [i]_k \in D\} \triangle I_{l,m} |}{|I_{l,m}|} < \epsilon,$$
\end{enumerate}
where $[i]_k$ denotes the congruency class of $i\!\mod k$.
\end{thm}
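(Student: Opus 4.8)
The plan is to prove both implications by extracting, for each modulus $k$, an approximate finite cyclic factor of $(X_V,\mu,T)$ directly from the tower structure, and then to recognize conditions (a) and (b) as exactly what is needed to (i) promote such an approximate factor to a genuine factor onto the rotation $(\Z/k\Z,+1)$ and (ii) make the resulting family generate the whole $\sigma$-algebra. I would reduce everything to the standard fact that an ergodic measure-preserving transformation is isomorphic to an odometer precisely when it admits an increasing sequence of finite factors $\pi_{k_i}\colon(X_V,\mu,T)\to(\Z/k_i\Z,+1)$ with $k_i\mid k_{i+1}$ whose join generates $\mathcal B(X_V)$. Thus the entire argument becomes a matter of producing, or recognizing, these cyclic factors from the combinatorics of the generating sequence.

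For the direction $(\Leftarrow)$ I would fix $l$ and $\epsilon$, take $k$ as supplied by the hypothesis, and define on the level-$m$ tower the function $c_m(x)=[\,\ell\,]_k$, where $\ell$ is the position of $x$ inside the copy of $v_m$ containing it. Moving up one level increments $\ell$ by $1$, so $c_m\circ T=c_m+1$ off the top level, whose measure tends to $0$; hence each $c_m$ is an approximate eigen-coordinate for the rotation on $\Z/k\Z$. The role of (a) is coherence across scales: since all but an $\eta$-fraction of the copies of $v_m$ inside $v_n$ begin at a position $\equiv j\pmod k$, the position of a point inside $v_n$ and inside $v_m$ differ by $j$ for all but measure $<\eta$, so $c_n=c_m+j$ off a small set. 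As $\eta$ may be taken arbitrarily small for $m$ large, after fixing representatives the family $(c_m)$ is Cauchy in measure and converges to a genuine factor $\pi_k\colon X_V\to\Z/k\Z$ intertwining $T$ with $+1$. Running this over a divisibility-closed cofinal family of moduli yields an odometer factor $\pi$, and (b) supplies generation: it says each level set $I_{l,m}$ agrees up to $\epsilon$ with a union of residue classes mod $k$, i.e. with a $\pi_k$-measurable set, and since tower levels generate $\mathcal B(X_V)$ in a rank-one system, $\pi$ separates points almost everywhere and is therefore an isomorphism.

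For $(\Rightarrow)$ I would assume $(X_V,\mu,T)\cong\varprojlim\Z/q_i\Z$ and work with the generating cyclic partitions of the odometer pulled back to $X_V$. Given $l$ and $\epsilon$, I would choose $q_i$ large enough that the rotation factor mod $q_i$ already approximates, within $\epsilon$, the partition into tower levels up through stage $l$, and set $k=q_i$. Approximating the atoms of this finite cyclic factor by unions of level sets of the stage-$m$ tower (again legitimate for large $m$ by rank-one) forces the period-$k$, $T$-cyclic structure to be compatible with the tower only if the starting positions of the $v_m$-copies concentrate, modulo $k$, in a single residue class, and this concentration sharpens as $m\to\infty$ because the mod-$k$ factor partitions the space into exactly periodic atoms; this is precisely (a). The approximation of the level sets themselves by residue-class unions mod $k$ is precisely (b).

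The hard part will be the uniformity concealed in the nested quantifiers. In $(\Leftarrow)$ the delicate point is that $k$ must be chosen before $\eta$ yet simultaneously serve the coherence estimate (a) for all large $m<n$ and the generation estimate (b) for the fixed $l,\epsilon$; guaranteeing that one modulus does both, and that letting $l\to\infty$ and $\epsilon\to0$ yields a genuinely increasing, generating tower of cyclic factors rather than a mere factor, is where the real work lies. In $(\Rightarrow)$ the corresponding difficulty is combinatorial: translating the measure-theoretic assertion that the mod-$k$ factor is exactly periodic into the arithmetic assertion that block-start positions are congruent mod $k$, despite the heights $|v_n|$ not dividing one another, requires tracking how the spacers shift residues and showing that those shifts vanish in the limit. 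Controlling these residue shifts uniformly across the double limit $n>m\to\infty$ is the crux of the argument.
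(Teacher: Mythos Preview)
The paper does not prove this theorem: Theorem~\ref{Getal} is quoted from \cite{GS} and stated without proof, serving only as motivation for the definition of accc rank-one subshifts. There is therefore no ``paper's own proof'' to compare your proposal against.

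As for the proposal itself, your outline is a reasonable strategy and is in the spirit of how such characterizations are typically established: reduce isomorphism to an odometer to the existence of a generating nested family of cyclic factors, then read conditions (a) and (b) as, respectively, the Cauchy-in-measure coherence needed to pass from approximate mod-$k$ coordinates on towers to a genuine factor map, and the generation condition needed to make the resulting odometer factor an isomorphism. You have correctly identified the two genuinely delicate points: in the $(\Leftarrow)$ direction, arranging that a single $k$ chosen from $l,\epsilon$ alone simultaneously yields coherence for all later $\eta$ and all large $m<n$; and in the $(\Rightarrow)$ direction, converting exact periodicity of the odometer factor into the combinatorial residue statement about the starting positions $I_{m,n}$ despite the spacers. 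What you have written is a plan rather than a proof---each of the phrases ``approximating the atoms \dots\ forces the period-$k$ structure to be compatible'' and ``those shifts vanish in the limit'' hides a nontrivial argument---but since the present paper does not supply a proof either, no further comparison is possible here; for the actual argument you would need to consult \cite{GS}.
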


Note that if $(X_V, \mu, T)$ is isomorphic to an odometer and $(X_V, T)$ is bounded, then $V$ is periodic and $(X_V, T)$ is trivial. Motivated by Clause (b) with $l=0$, we introduce the following notion.

\begin{defn}
Let $M\subseteq \mathbb{Z}$ be nonempty and $A\subseteq \mathbb{N}$ be finite. We say $A$ is {\it a building block of $M$} if $0\in A$ and there exists a non-decreasing sequence $\{a_{i,A}\}_{i\in\mathbb{Z}}$ of integers such that ${M=\bigcup_{i\in\mathbb{Z}} (A+a_{i,A})}$ and for every $i\in \mathbb{Z}$ we have $a_{i+1,A}-a_{i,A}>\max(A)$ or $a_{i+1,A}=a_{i,A}$.
\end{defn}

\begin{defn}
We say $M\subseteq \mathbb{Z}$ is an {\it almost complete congruency class} ({\em accc}) if $M=\emptyset$ or for every $\epsilon>0$ there exist $k\in\mathbb{N}$ with the following property which we denote as $P(M,\epsilon, k)$:
\begin{quote}
 for every  $N\in\mathbb{N}$ there exist a building block $A$ of $M$ and $D_A\subseteq\mathbb{Z}/k\mathbb{Z}$ such that $\max(A)\geq N$ and
$$\displaystyle\frac{\lvert \{ 0\leq n\leq \max(A) \ : \ [n]_k\in D_A\}\triangle A \rvert}{\max(A)} <\epsilon.$$
\end{quote}
\end{defn}

Thus Theorem~\ref{Getal} implies that if $(X_V, \mu, T)$ is isomorphic to an odometer then the set of positions for $0$s in $V$ is an accc, and in this case it also follows that for every $x\in X_V$, the set of positions for $0$s in $x$ is an accc. Motivated by this observation, we call a rank-one subshift $(X,T)$ an {\it accc rank-one subshift} if $$M_x=\{n\in \mathbb{Z} \ : \ x(n)=0\}$$ is an accc for every $x\in X$.

Our first main result of the paper is the following.

\begin{thm}
Let $(X,T)$ be an accc rank-one subshift. Then $(X, T)$ is M\"{o}bius disjoint.
\end{thm}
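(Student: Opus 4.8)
The plan is to reduce Möbius disjointness to a cancellation statement for $\mu$ summed over the zero-set $M_x$ and its finite intersections of shifts, and then to exploit the accc structure to replace $M_x$, on a large building block, by a union of congruence classes, so that an averaged short-interval cancellation estimate for $\mu$ along arithmetic progressions (the technique of \cite{KLR}) can finish the argument.

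First I would reduce to cylinder functions. Since $X\subseteq 2^{\Z}$ is totally disconnected, the indicators $\mathbf 1_{[w]}$ of cylinders $[w]=\{x:x(0)\cdots x(|w|-1)=w\}$ span a uniformly dense subalgebra of $C(X)$, and both linear combinations and uniform limits preserve the estimate $\sum_{n\le N}f(T^nx)\mu(n)=o(N)$; so it suffices to treat $f=\mathbf 1_{[w]}$. Writing $M_x=\{n:x(n)=0\}$, the condition $(T^nx)(j)=w_j$ for all $j<|w|$ says $n+j\in M_x$ exactly when $w_j=0$; expanding $\prod_{j:w_j=0}\mathbf 1_{M_x}(n+j)\prod_{j:w_j=1}\bigl(1-\mathbf 1_{M_x}(n+j)\bigr)$ by inclusion–exclusion, and using $\sum_{n\le N}\mu(n)=o(N)$ (Theorem~\ref{PNT}) for the constant term, the problem reduces to showing
\[
\sum_{n\le N}\mathbf 1_{M'}(n)\,\mu(n)=o(N),\qquad M'=\bigcap_{j\in F}(M_x-j),
\]
for every finite $F\subseteq\Z$ and every $x\in X$. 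Because each $M_x$ is accc and the congruence-class approximation is stable under finite intersection of shifts (an intersection of unions of residues mod $k$ is again a union of residues mod $k$, and the building-block/translate structure survives up to boundary terms of size $O(|F|)$ per block), it is enough to prove the displayed bound with $M'$ replaced by an accc set $M$.

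Next I would use the accc property quantitatively. Fix $\epsilon>0$ and let $k=k(\epsilon)$ witness $P(M,\epsilon,k)$. For the summation length $N$, choose the largest available building block $A$ with $\max(A)=L\le N$; since building blocks of unbounded size exist, $L=L(N)\to\infty$. Decomposing $M\cap[0,N]$ into the disjoint translates $A+a_i$, each inside an interval $I_i$ of length $L$ with consecutive $a_i$ spaced by more than $L$, the accc approximation replaces $\mathbf 1_M$ on each $I_i$ by the indicator of $\{n\in I_i: n\equiv a_i+d\ (k),\ d\in D_A\}$ at the cost of at most $\epsilon L$ per block; as there are at most $N/L+O(1)$ blocks, the total replacement error is $O(\epsilon N)$. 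Thus, up to $O(\epsilon N)$, the sum equals
\[
\sum_i\ \sum_{d\in D_A}\ \sum_{\substack{n\in I_i\\ n\equiv a_i+d\ (k)}}\mu(n).
\]
Now I would invoke the short-interval input from \cite{KLR}: for fixed $k$, any fixed residue $r$, and any window length $\ell=\ell(N)\to\infty$, one has $\sum_{w}\bigl|\sum_{n\in w,\ n\equiv r\,(k)}\mu(n)\bigr|=o(N)$, where $w$ ranges over the length-$\ell$ windows of a grid covering $[0,N]$. Subdividing each block $I_i$ into sub-windows of length $\ell=\lfloor\sqrt{L}\rfloor\to\infty$, the triangle inequality bounds each inner sum by its sub-window sums; since the blocks are disjoint their sub-windows are essentially disjoint, so summing over $i$, over $d\in D_A$, and over the at most $k$ residues is dominated by $k\,|D_A|\,\sum_{w}\bigl|\sum_{n\in w,\,n\equiv r\,(k)}\mu(n)\bigr|=o(N)$. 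As $k=k(\epsilon)$ is fixed, this is $o(N)$, giving $\limsup_N N^{-1}\bigl|\sum_{n\le N}\mathbf 1_M(n)\mu(n)\bigr|\le C\epsilon$; letting $\epsilon\to0$ finishes the proof.

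The crux, and the reason Davenport's classical bound does not suffice, is exactly this last step. Because the cutting parameter is unbounded, a single building-block scale $L$ may recur $\sim r_n$ times inside $[0,N]$, so one must estimate $\mu$ over $\asymp N/L$ short intervals of length $L$; bounding each by Davenport's $O_B\!\bigl(N/(\log N)^B\bigr)$ loses a factor $\asymp N/L$ and fails. The averaged short-interval cancellation (of Matomäki–Radziwiłł type, in the form packaged in \cite{KLR}) is what restores enough cancellation across the many blocks. The delicate point to watch is that the blocks sit at arbitrary, sparsely spaced positions $a_i$ rather than on a grid, so the averaged estimate cannot be applied to them directly; passing to a grid of growing sub-windows of length $\sqrt{L}$ and exploiting the disjointness of the blocks is the device that reconciles the arbitrary placement with the averaged estimate.
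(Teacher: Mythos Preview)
Your overall strategy matches the paper's: reduce via Stone--Weierstrass and inclusion--exclusion to showing that $M'=\bigcap_j(M_x-n_j)$ is M\"obius-orthogonal, approximate $M'$ on each building-block translate by a union of residue classes mod $k$, and then feed this into Theorem~\ref{KLRadziwill}. The reduction steps are fine. The gap is in how you actually deploy Theorem~\ref{KLRadziwill}.

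You let the building block $A$ (hence $L=\max A$ and the sub-window length $\ell=\lfloor\sqrt L\rfloor$) depend on $N$, and then invoke a short-interval estimate with window length $\ell(N)\to\infty$. Two problems. First, Theorem~\ref{KLRadziwill} as stated gives $N_0=N_0(q,L)$ after $L$ is fixed; with $\ell$ growing in $N$ you have no control on whether $N\ge N_0(k,\ell(N))$. Second, and more seriously, your sub-windows are anchored at the block positions $a_i$, not on the KLR grid $[z+jLq,\,z+(j+1)Lq)$. Disjointness of the sub-windows does \emph{not} let you dominate $\sum_i\bigl|\sum_{n\in w_i}\mu(n)\bigr|$ by the grid sum: for a sub-window $w$ overlapping two grid windows $W_1,W_2$ one does not have $\bigl|\sum_{n\in w}\mu(n)\bigr|\le\bigl|\sum_{n\in W_1}\mu(n)\bigr|+\bigl|\sum_{n\in W_2}\mu(n)\bigr|$ in general. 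So the last displayed bound in your argument is unjustified.

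The paper avoids both issues by reversing the order of choices. One fixes $\epsilon$, takes $k$ from the accc property, applies Theorem~\ref{KLRadziwill} with $q=k$ and a \emph{fixed} $L\ge L_0$, obtains $N_0=N_0(k,L)$, and \emph{then} chooses the building block $A$ with $\max(A)$ much larger than $N_0Lk/\epsilon$ (and than all $|n_j|/\epsilon$). The KLR grid has fixed mesh $Lk$, and each block $[a_i,a_i+\max(A))$ contains many full grid windows; on those windows $M'$ (more precisely the set $M''$ obtained by replacing each block by the exact residue classes $D_A'\subseteq\Z/k\Z$) agrees with a union of residues, so its M\"obius sum is dominated termwise by the KLR double sum, which is $\le\epsilon N$. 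The only loss comes from the at most two grid windows straddling each block boundary, contributing at most $2Lk$ per block, hence at most $(N/\max(A))\cdot 2Lk\le\epsilon N$ in total. Your diagnosis that ``the blocks sit at arbitrary positions so the averaged estimate cannot be applied directly'' is thus a misreading: the fix is not to realign the windows to the blocks, but to make the blocks long relative to a fixed KLR grid and absorb the boundary pieces.
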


This theorem will be proved in Section 2. In Sections 3 and 4 we will consider another class of rank-one subshifts which are generalizations of Katok's map studied in \cite{ALR}. Katok's map is a rank-one subshift where for all $n\in\mathbb{N}$, $r_n$ is even and
$$ s_{n,i}=\left\{\begin{array}{ll} 0, & \mbox{ for $0< i\leq r_n/2$,} \\ 1, & \mbox{ for $r_n/2<i\leq r_n$.} \end{array}\right. $$
In \cite{ALR} Sarnak's conjecture for Katok's map was verified under the condition $$\displaystyle\lim_{n\to\infty} \frac{r_n}{|v_n|}=+\infty.$$ Here we prove Sarnak's conjecture for a class of generalized Katok's maps under a weaker condition.

The key technique used in all of our proofs is an estimate of the M\"{o}bius function on short intervals along arithmetic progressions developed by Kanigowski--Lema\'{n}czyk--Radziwi\l\l \ \cite{KLR}.
\begin{thm}[Kanigowski--Lema\'nczyk--Radziwi\l\l \ \cite{KLR}]\label{KLRadziwill}
For each $\epsilon\in (0,\frac{1}{100})$ there exists $L_0$ such that for each $L\geq L_0$ and $q\geq 1$ with
\begin{equation}\label{longap}
\sum_{\substack{p\vert q\\ p \text{ prime}}}\frac{1}{p}\leq \ (1-\epsilon)\!\!\sum_{\substack{p\leq L\\ p \text{ prime}}}\frac{1}{p}
\end{equation}
we can find $N_0=N_0(q,L)$ such that for all $N\geq N_0$, we have
\begin{equation}
\sum_{j=0}^{N/Lq}\sum_{a=0}^{q-1}\Bigr\lvert \sum_{\substack{m\in[z+jLq,z+(j+1)Lq)\\m\equiv a \bmod{q}}}\mu(m) \Bigr\rvert\leq \epsilon N
\end{equation}
for some $0\leq z<Lq$.

\end{thm}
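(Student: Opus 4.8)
The plan is to deduce the estimate from the Matom\"{a}ki--Radziwi\l\l\ theorem on multiplicative functions in short intervals, after converting the double sum over intervals and residues into a single average over all short windows and exploiting the freedom in the shift $z$. First I would remove $z$ by pigeonholing: since each $w\in\{0,\dots,N-1\}$ is uniquely $w=z+jLq$ with $0\le z<Lq$, averaging the left-hand side over $z$ gives, up to boundary terms that are $o(NLq)$,
\begin{equation*}
\frac{1}{Lq}\sum_{z=0}^{Lq-1}\sum_{j}\sum_{a=0}^{q-1}\Bigl\lvert\!\!\sum_{\substack{m\in[z+jLq,\,z+(j+1)Lq)\\ m\equiv a \bmod q}}\!\!\mu(m)\Bigr\rvert=\frac{1}{Lq}\sum_{w=0}^{N-1}\sum_{a=0}^{q-1}\Bigl\lvert\!\!\sum_{\substack{m\in[w,\,w+Lq)\\ m\equiv a \bmod q}}\!\!\mu(m)\Bigr\rvert .
\end{equation*}
Hence it suffices to bound the right-hand quantity by $\epsilon N$, for then some admissible $z$ works. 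Dividing out, the target is an $\ell^{1}$ cancellation statement: for a typical window of length $Lq$, the $\mu$-sum over each of the $q$ progressions modulo $q$ is, on average over $a$ and $w$, at most $\epsilon L$. Since each class meets such a window in $\sim L$ points, this is exactly the Matom\"{a}ki--Radziwi\l\l\ regime of cancellation for $\mu$ on short intervals.

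Next I would strip the common factor in each class. If $d=\gcd(a,q)$ is not squarefree the inner sum vanishes identically, so only squarefree $d\mid q$ contribute; for these, writing $m=dn$ and $\mu(dn)=\mu(d)\mu(n)$ (valid when $\gcd(n,d)=1$, the coprimality being restored by a short inclusion--exclusion over divisors of $d$) turns the class $m\equiv a \bmod q$ into a \emph{reduced} class $n\equiv b \bmod q'$, $q'=q/d$, $\gcd(b,q')=1$, on an interval of length $Lq'$ again carrying $\sim L$ points. Thus it suffices to prove, uniformly over divisors $q'\mid q$ (the hypothesis \eqref{longap} being inherited by every such $q'$), the short-interval Matom\"{a}ki--Radziwi\l\l\ estimate along coprime progressions
\begin{equation*}
\frac{1}{X}\sum_{w\le X}\ \sum_{\substack{b \bmod q'\\ \gcd(b,q')=1}}\Bigl\lvert\!\!\sum_{\substack{w<n\le w+Lq'\\ n\equiv b \bmod{q'}}}\!\!\mu(n)\Bigr\rvert\ \le\ \epsilon\,L\,\phi(q'),\qquad L\ge L_{0},\ X\ge X_{0}(q',L);
\end{equation*}
the identity $\sum_{d\mid q}\phi(q/d)=q$ then guarantees that recombining over the squarefree $d\mid q$ loses nothing and returns the desired $\epsilon L q$ per window.

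The heart of the matter is this last estimate \emph{with uniformity in the modulus}, and this is where \eqref{longap} is used. The Matom\"{a}ki--Radziwi\l\l\ method rewrites the short sum by a Tur\'{a}n--Kubilius weighting that factors out a prime $p\le L$ dividing $n$, reducing the short-interval average to mean values of Dirichlet polynomials built from $\mu$ on primes; for the progression version one must draw $p$ with $p\nmid q'$, so that $n\mapsto n/p$ permutes the reduced residues modulo $q'$. Condition \eqref{longap} guarantees precisely that the usable primes carry positive Mertens mass, $\sum_{p\le L,\ p\nmid q}\frac1p\ge\epsilon\sum_{p\le L}\frac1p$, so the weighting is non-degenerate and the range of primes is long enough to feed the $L^{2}$/Parseval large-values input; the surviving main term is killed because $\mu$ does not correlate with any $n^{it}\chi(n)$, by Hal\'{a}sz's theorem and the prime number theorem in progressions.

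The genuine obstacle I anticipate is securing this Matom\"{a}ki--Radziwi\l\l\ estimate uniformly in $q$: one must control the relevant Dirichlet-polynomial mean values for a modulus that may grow with $N$, so that the bound depends on $q$ only through \eqref{longap} and not through $\phi(q)$ --- a naive expansion into all $\phi(q)$ Dirichlet characters, each estimated separately, is hopelessly lossy when $q$ is a large prime, so the characters must be handled collectively. Tracking how $L_{0}$ and $N_{0}(q,L)$ must be chosen so that the final constant is the prescribed $\epsilon$ rather than a qualitative $o(1)$ is the remaining quantitative core; the reductions above are comparatively routine bookkeeping.
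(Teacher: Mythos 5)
This statement is quoted verbatim from Kanigowski--Lema\'nczyk--Radziwi\l\l\ \cite{KLR}; the paper itself contains no proof of it, so the only meaningful benchmark is the argument in \cite{KLR}. Your two preliminary reductions are sound: averaging over $z$ and pigeonholing correctly converts the ``for some $z$'' conclusion into a bound on the mean over all windows $[w,w+Lq)$ (the identity holds up to $O(Lq)$ boundary terms, and $z$ is allowed to depend on $N$, so the pigeonhole is legitimate), and the hypothesis \eqref{longap} is indeed inherited by divisors $q'\mid q$. But after these reductions what remains is the entire theorem: an $\ell^1$ Matom\"aki--Radziwi\l\l-type estimate for $\mu$ in short intervals along progressions, \emph{uniform in the modulus} subject only to \eqref{longap}. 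You state this as a target, describe in qualitative terms the Tur\'an--Kubilius factoring over primes $p\nmid q$, the role of the positive Mertens mass, and the Hal\'asz input, and then explicitly concede that the uniformity in $q$ --- handling the characters mod $q$ collectively rather than one at a time --- is unresolved. That concession is the genuine gap: the proposal reduces the quoted theorem to an essentially equivalent statement and stops, with all of the actual analytic work of \cite{KLR} (the Dirichlet-polynomial mean-value estimates, the choice of the prime ranges, and the quantitative passage from $o(1)$ to the prescribed $\epsilon$ with explicit $L_0$ and $N_0(q,L)$) missing.

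One concrete technical point inside your reduction also deserves a warning: the inclusion--exclusion restoring $\gcd(n,d)=1$ after writing $m=dn$, $d=\gcd(a,q)$, is not loss-free bookkeeping. The divisor sum over $e\mid d$ has $2^{\omega(d)}$ terms, and under \eqref{longap} alone one can have $\sum_{p\mid q}1/p$ as large as $(1-\epsilon)\log\log L$, so $\prod_{p\mid d}(1+1/p)$ may grow like a power of $\log L$; trivially bounding the $e$-terms is therefore too lossy, while treating them nontrivially reintroduces $\mu$-sums over progressions to the larger moduli $eq'$, making the reduction recursive rather than a clean pass to reduced classes. This is presumably why the statement in \cite{KLR} is formulated, and proved, for \emph{all} residues $a \bmod q$ at once rather than only for reduced classes. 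As it stands, your proposal is a plausible roadmap that correctly identifies the framework, but it is not a proof.
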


\section{Accc Rank-One Subshifts}
\begin{defn}
Let $M\subseteq \mathbb{Z}$. We say $M$ is {\em orthogonal to the M\"{o}bius function} if
\[
\lim_{N\rightarrow \infty} \frac{1}{N}\sum_{\substack{n\in M\\ 1\leq n\leq N}} \mu(n)=0.
\]
\end{defn}
Note that by Theorem \ref{PNT}, if $\mathbb{N}\subseteq M\subseteq \mathbb{Z}$ then $M$ is orthogonal to the M\"{o}bius function. Trivially the empty set is orthogonal to the M\"{o}bius function.


\begin{thm}\label{multiplication}
Let $M\subseteq \mathbb{Z}$ be an accc and $n_1,n_2,\dots,n_l$ be integers. Then
\[
M'=\{n\in \mathbb{Z} \ : \ n+n_1,n+n_2,\dots,n+n_l\in M\}
\]
is orthogonal to the M\"{o}bius function.
\end{thm}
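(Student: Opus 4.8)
The plan is to show that for every small $\epsilon>0$ one has $\limsup_{N\to\infty}\frac1N\bigl|\sum_{n\in M',\,1\le n\le N}\mu(n)\bigr|\le C_l\,\epsilon$ for a constant $C_l$ depending only on $l$, and then let $\epsilon\to 0$. If $M=\emptyset$ then $M'=\emptyset$ and there is nothing to prove, so assume $M\neq\emptyset$. First I would fix $\epsilon\in(0,\frac1{100})$ and use the accc hypothesis to obtain $k\in\N$ with property $P(M,\epsilon,k)$. Since $\sum_{p\le L}1/p\to\infty$ as $L\to\infty$ while $\sum_{p\mid k}1/p$ is a fixed finite number, I can choose $L\ge L_0(\epsilon)$ large enough that the congruence condition \eqref{longap} holds with $q=k$; Theorem~\ref{KLRadziwill} then applies with this $q$ and $L$. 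Finally I fix \emph{one} building block $A$ of $M$ together with its associated $D_A\subseteq\Z/k\Z$, chosen via $P(M,\epsilon,k)$ so that $\max(A)\ge C(Lk+l\max_j|n_j|)/\epsilon$ for a suitable absolute constant $C$; crucially this lower bound does not depend on the summation range $N$, so a single $A$ serves for all $N$.

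Next I would analyze the local structure of $M'$. Writing $M=\bigcup_i(A+a_{i,A})$, the defining gap condition makes distinct copies occupy pairwise disjoint intervals $[a_{i,A},a_{i,A}+\max(A)]$, at most $N/\max(A)+O(1)$ of which meet $[1,N]$. For $n$ whose translates $n+n_1,\dots,n+n_l$ all fall inside a single copy $A+a_{i,A}$, membership $n+n_j\in M$ is equivalent to $n+n_j\in A+a_{i,A}$, and the approximation of $A$ by $D_A$ converts this into the congruence $[n]_k\in[a_{i,A}]_k+D_A-[n_j]_k$. Intersecting over $j$ shows that on the interior of the $i$-th copy, $M'$ agrees with $\{\,n:[n]_k\in\tilde D_i\,\}$ for $\tilde D_i=[a_{i,A}]_k+\bigcap_{j=1}^l(D_A-[n_j]_k)$, up to an error set of size at most $l\,\epsilon\max(A)+O(l\max_j|n_j|)$ per copy (the first term from the $l$ applications of the symmetric-difference estimate, the second from the $n$ lying within $O(\max_j|n_j|)$ of a copy boundary). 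Summed over all copies meeting $[1,N]$, the choice of $\max(A)$ makes the total error $O(l\epsilon)N+O(1)$.

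Then I would estimate the main term $\sum_i\sum_{n\in\mathrm{copy}_i\cap[1,N],\,[n]_k\in\tilde D_i}\mu(n)$ using Theorem~\ref{KLRadziwill}. Let $z$ be the shift produced by the theorem and partition $[z,z+N)$ into the blocks $B_j=[z+jLk,z+(j+1)Lk)$. For a block contained in a single copy, $\bigl|\sum_{n\in B_j,\,[n]_k\in\tilde D_i}\mu(n)\bigr|\le\sum_{a=0}^{k-1}\bigl|\sum_{n\in B_j,\,n\equiv a\bmod k}\mu(n)\bigr|$, and since distinct copies are disjoint each such block is counted once; summing over all full blocks and invoking the conclusion of Theorem~\ref{KLRadziwill} bounds this contribution by $\epsilon N$. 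The remaining pieces are the initial segment $[1,Lk)$, the partial blocks at the two ends of each copy, and the blocks straddling a copy boundary; because $\max(A)>Lk$ every copy and every gap is longer than one block, so each copy contributes only $O(1)$ such blocks, their number is $O(N/\max(A)+1)$, and bounding each trivially by its length $Lk$ gives $O(Lk\cdot N/\max(A))=O(\epsilon N)$. Adding the error term of the previous paragraph yields the desired estimate, and letting $\epsilon\to0$ proves the theorem.

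The hard part is not any single estimate but the bookkeeping that forces all three error sources—the symmetric-difference error from approximating $A$ by residue classes, the boundary error from the $l$ translations defining $M'$, and the partial or straddling KLR blocks—to be simultaneously $O(\epsilon N)$. This is exactly what motivates fixing one building block $A$ with $\max(A)$ large compared to $Lk$ and $l\max_j|n_j|$ but \emph{independent} of $N$, rather than letting $A$ grow with the summation range. All of the genuinely arithmetic content, namely the cancellation of $\mu$ along arithmetic progressions in short intervals, is supplied by Theorem~\ref{KLRadziwill}, so the number-theoretic input reduces to verifying \eqref{longap}, which is immediate from the divergence of $\sum_p 1/p$.
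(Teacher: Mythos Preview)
Your proposal is correct and follows essentially the same approach as the paper: fix $k$ from the accc property, apply Theorem~\ref{KLRadziwill} with $q=k$, choose a single building block $A$ with $\max(A)$ large relative to $Lk$ and the translations $n_j$ (but independent of $N$), approximate $M'$ on each translated copy of $A$ by a union of residue classes mod $k$, and bound the three error sources---symmetric-difference, translation-boundary, and straddling-block---each by $O(\epsilon N)$. The paper's write-up differs only cosmetically, packaging the copy-by-copy approximation into a single global set $M''=\bigcup_i(B+a_{i,A})$ with $B=\{0\le n\le\max(A):[n]_k\in D_A'\}$, which is exactly your $\tilde D_i$ picture translated back to the origin.
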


\begin{proof}
Fix $0<\epsilon<\frac{1}{100}$. Assume $M$ is nonempty. Since $M$ is an accc, there exists $k\in \mathbb{N}$ such that property $P(M,\epsilon,k)$ holds. Applying Theorem \ref{KLRadziwill} with $q=k$, we obtain $L_0$ and $L\geq L_0$ satisfying (\ref{longap}), and there exists $N_0=N_0(k,L)>0$ such that for every $N\geq N_0$ we have
\[
\sum_{j=0}^{N/Lk}\sum_{a=0}^{k-1}\Bigr\lvert \sum_{\substack{m\in[z+jLk,z+(j+1)Lk)\\m\equiv a \bmod{k}}}\mu(m) \Bigr\rvert\leq \epsilon N
\]
for some $0\leq z< Lk$.

From property $P(M,\epsilon,k)$ we obtain a building block $A$ of $M$ with
$$\max(A)\geq 2N_0Lk/\epsilon, \lvert 2n_1/\epsilon\rvert,\dots,\lvert 2n_l/\epsilon\rvert,$$ $D_A\subseteq \mathbb{Z}/k\mathbb{Z}$, and a non-decreasing sequence $\{a_{i,A}\}_{i\in\mathbb{Z}}$ of integers such that for every $i\in \mathbb{Z}$ we have $a_{i+1,A}-a_{i,A}> \max(A)$ or $a_{i+1,A}=a_{i,A}$, ${M=\bigcup_{i\in\mathbb{Z}} (A+a_{i,A})}$, and
\[
\frac{\lvert \{ 0\leq n\leq \max(A)  :  [n]_k\in D_A\}\triangle A \rvert}{\max(A)} <\epsilon.
\]

Let $$D_A'=\bigcap_{\lambda=1}^l (-n_\lambda+D_A)\subseteq \mathbb{Z}/k\mathbb{Z}.$$ Fix an arbitrary $i\in \mathbb{Z}$. Let $B=\{0\leq n\leq \max(A) :  [n]_k\in D_A'\}$ and
$B_i=\{0\leq n\leq \max(A): n+a_{i,A}\in M'\}$.
We claim that
\[\displaystyle\frac{\lvert B\triangle B_i\rvert}{\max(A)} \leq(l+1)\epsilon.
\]
To see this, let $\delta=\max(\lvert n_1\rvert,\dots,\lvert n_l\rvert)$, $C=\{\delta\leq n\leq \max(A)-\delta :  [n]_k\in D_A'\}$, and
$C_i=\{\delta\leq n\leq \max(A)-\delta:n+a_{i,A}\in M'\}$. Note that
 $$ |(C\triangle C_i)\triangle (B\triangle B_i)|\leq 2\delta\leq \epsilon\max(A). $$ It is therefore sufficient to verify that $|C\triangle C_i|\leq l\epsilon\max(A)$. Observe that if $\delta\leq n\leq \max(A)-\delta$ and $1\leq \lambda\leq l$, then $n+a_{i,A}+n_\lambda\in M$ iff $n+n_\lambda\in A$. Fix any $n$ with $\delta\leq n\leq \max(A)-\delta$. If $n+a_{i,A}\in M'$ and $[n]_k\notin D_A'$, then there exists $1\leq \lambda\leq l$ such that $n+n_\lambda\in A$ and $[n+n_\lambda]_k\notin D_A$. Similarly, if $n+a_{i,A}\notin M'$ and $[n]_k\in D_A'$, then there exists $1\leq \lambda\leq l$ such that $n+n_\lambda\notin A$ and $[n+n_\lambda]_k\in D_A$. In either case, we have
 \[
n+n_\lambda\in \{ 0\leq m\leq \max(A)  : [m]_k\in D_A\}\triangle A.
\]
Now $|C\triangle C_i|\leq l\epsilon\max(A)$ follows from the fact that we have $l$-many different possibilities for $\lambda$ and
\[
\lvert \{ 0\leq m\leq \max(A)  :  [m]_k\in D_A\}\triangle A\rvert\leq\epsilon \max(A).
\]
This proves the claim.

Let $M''=\bigcup_{i\in\mathbb{Z}} (B+a_{i,A})$. We next claim that for every $N\geq 2\max(A)/\epsilon$, we have
\[
\frac{\lvert \{n\in M' : 1\leq n\leq N\} \triangle \{n\in M'' :  1\leq n\leq N\} \rvert}{N} \leq(l+1)\epsilon+\epsilon.
\]
To see this, let $s=\min\{i:1\leq a_{i,A}\leq N\}$ and $r=\max\{i: 1\leq a_{i,A}\leq N\}$. Then $(r-s)\max(A)\leq N$. Note that
$$ M'\cap [1,N]=\bigcup_{i=s}^{r-1}(B_i+a_{i,A})\cup D_0 $$
and
$$ M''\cap [1,N]=\bigcup_{i=s}^{r-1} (B+a_{i,A})\cup D_1 $$
for some $D_0, D_1\subseteq [1,a_{s,A})\cup [a_{r,A}, N]$. Thus by the preceding claim we have
$$\begin{array}{rcl} |(M'\cap [1,N])\triangle (M''\cap [1,N])|& \leq &2\max(A)+\displaystyle\sum_{i=s}^{r-1}|B\triangle B_i| \\
&\leq &2\max(A)+(r-s)(l+1)\epsilon\max(A) \\ \\
&\leq &\epsilon N+(l+1)\epsilon N=(l+2)\epsilon N. \end{array}
$$
It now follows that
\[
\Bigr\lvert\frac{1}{N}\sum_{\substack{n\in M'\\ 1\leq n\leq N}} \mu(n)\Bigr\rvert\leq (l+2)\epsilon+\Bigr\lvert\frac{1}{N}\sum_{\substack{n\in M''\\ 1\leq n\leq N}} \mu(n)\Bigr\rvert.
\]

We conclude the proof by showing that, for $N>3\max(A)/\epsilon$,
\[
\Bigr\lvert\sum_{\substack{n\in M''\\ 1\leq n\leq N}} \mu(i)\Bigr\rvert\leq \epsilon N+\epsilon N+\sum_{j=0}^{N/Lk}\sum_{a=0}^{k-1}\Bigr\lvert \sum_{\substack{m\in[z+jLk,z+(j+1)Lk)\\m\equiv a \bmod{k}}}\mu(m) \Bigr\rvert\leq 3 \epsilon N.
\]
To see this, for each $s\leq i\leq r$, let $t_i=\max\{j\in\mathbb{Z}: z+jLk\leq a_{i,A}\}$ and $u_i=\max\{j\in\mathbb{Z}: z+jLk\leq a_{i,A}+\max(A)\}$. Then in the above inequality the first error term of $\epsilon N$ allows us to consider, instead of $M''\cap[1,N]$, the set $M''\cap [z+t_sLk, z+t_rLk)$, since the difference is bounded by $2\max(A)+Lk\leq 3\max(A)\leq \epsilon N$. 
Since $N>3\max(A)/\epsilon> N_0$, we may apply Theorem \ref{KLRadziwill} to get the third term of the above inequality, which is an over-estimate of the sum
$$ \Bigr\lvert\displaystyle\sum_{n\in M''\cap [z+t_sLk, z+t_rLk)}\mu(m)\Bigr\rvert $$
except over the intervals $[z+t_iLk, z+(t_i+1)Lk)$ and $[z+u_iLk, z+(u_i+1)Lk)$ for $s\leq i<r$. Finally, the total error on these intervals is bounded by $(r-s)2Lk\leq [N/\max(A)]2Lk\leq \epsilon N$, which gives the second error term of $\epsilon N$.
\end{proof}

\begin{cor}\label{odometer}
Let $M\subseteq \mathbb{Z}$ be an accc.
Then $M$ is orthogonal to the M\"{o}bius function.
\end{cor}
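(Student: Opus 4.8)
The plan is to derive this corollary as an immediate degenerate case of Theorem~\ref{multiplication}. That theorem already establishes orthogonality to the M\"{o}bius function for every set of the form $M'=\{n\in\mathbb{Z}:n+n_1,\dots,n+n_l\in M\}$, where $M$ is an accc and $n_1,\dots,n_l$ are arbitrary integers. Since the corollary concerns $M$ itself, I would simply specialize the list of shifts so that $M'$ collapses to $M$.

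Concretely, I would invoke Theorem~\ref{multiplication} with $l=1$ and $n_1=0$. For these parameters,
\[
M'=\{n\in\mathbb{Z}:n+0\in M\}=\{n\in\mathbb{Z}:n\in M\}=M,
\]
so the conclusion of Theorem~\ref{multiplication} for this choice of data is verbatim the assertion that $M$ is orthogonal to the M\"{o}bius function. Thus the corollary follows directly, with no additional estimation required.

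There is essentially no obstacle to overcome: all of the analytic work---the application of the Kanigowski--Lema\'nczyk--Radziwi\l\l\ estimate (Theorem~\ref{KLRadziwill}) and the building-block approximation---is already carried out inside the proof of Theorem~\ref{multiplication}, and the only thing left to check is the trivial set identity $M'=M$ for the chosen shifts. I would also note that Theorem~\ref{multiplication} is stated for an arbitrary accc, with the empty set handled by convention as orthogonal to the M\"{o}bius function, so the corollary holds in full generality without any separate case analysis. In short, this is a one-line consequence of the preceding theorem.
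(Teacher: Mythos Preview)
Your proposal is correct and matches the paper's own proof exactly: the paper likewise derives the corollary by applying Theorem~\ref{multiplication} with $l=1$ and $n_1=0$.
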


\begin{proof}
This is a direct consequence of Theorem \ref{multiplication} with $l=1$ and $n_1=0$.
\end{proof}

\begin{thm}\label{acccsubshift}
Let $(X,T)$ be an accc rank-one subshift. Then $(X, T)$ is M\"{o}bius disjoint.
\end{thm}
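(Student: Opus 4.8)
The plan is to carry out the standard reduction of Möbius disjointness for subshifts to the Möbius orthogonality of explicit subsets of $\mathbb{Z}$, using Theorem~\ref{multiplication} as the main engine. Fix $x\in X$ and a continuous function $f\colon X\to\mathbb{R}$. Since $X$ is a closed subset of $2^{\mathbb{Z}}$, every continuous function on $X$ is a uniform limit of finite linear combinations of indicators $\mathbf{1}_C$ of cylinder sets, where a cylinder $C$ is determined by a finite word $w\in 2^{\{-m,\dots,m\}}$ via $C=\{y\in X: y(j)=w(j)\text{ for }-m\le j\le m\}$. Because $|\mu(n)|\le 1$, if $\|f-g\|_\infty<\epsilon$ then $\frac1N\bigl|\sum_{n\le N}(f-g)(T^nx)\mu(n)\bigr|\le\epsilon$ for all $N$; hence it will suffice to prove $\frac1N\sum_{n\le N}\mathbf{1}_C(T^nx)\mu(n)\to 0$ for each cylinder $C$, as this forces $\limsup_N\frac1N\bigl|\sum_{n\le N}f(T^nx)\mu(n)\bigr|\le\epsilon$ for every $\epsilon>0$, and therefore the full limit to be $0$.

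For the cylinder case, observe that $\mathbf{1}_C(T^nx)=1$ exactly when $x(n+j)=w(j)$ for all $-m\le j\le m$. Writing $P=\{j:w(j)=0\}$ and $Q=\{j:w(j)=1\}$, this says $n+j\in M_x$ for every $j\in P$ and $n+j\notin M_x$ for every $j\in Q$. This is where the hypothesis enters: $M_x$ is an accc, so Theorem~\ref{multiplication} applies to conjunctions of membership conditions. The one genuine difficulty is that $Q$ contributes \emph{non}-membership conditions, which Theorem~\ref{multiplication} does not address directly. I would resolve this by inclusion--exclusion: setting $A_S=\{n\in\mathbb{Z}: n+j\in M_x\text{ for all }j\in P\cup S\}$ for $S\subseteq Q$, one has the pointwise Boolean identity
\[
\mathbf{1}_C(T^nx)=\sum_{S\subseteq Q}(-1)^{|S|}\mathbf{1}_{A_S}(n).
\]

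To conclude, note that each $A_S$ is precisely the set $M'$ of Theorem~\ref{multiplication} applied to the accc $M=M_x$ with shifts $\{j:j\in P\cup S\}$, and is therefore orthogonal to the Möbius function; in the degenerate case $P\cup S=\emptyset$ (that is, $w=1^{2m+1}$ and $S=\emptyset$) one has $A_\emptyset=\mathbb{Z}$, which is orthogonal by Theorem~\ref{PNT}. Since there are only $2^{|Q|}$ subsets $S$, a number independent of $N$, summing yields
\[
\frac1N\sum_{n\le N}\mathbf{1}_C(T^nx)\mu(n)=\sum_{S\subseteq Q}(-1)^{|S|}\,\frac1N\sum_{\substack{n\in A_S\\ 1\le n\le N}}\mu(n)\longrightarrow 0,
\]
which finishes the cylinder case and hence the theorem.

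The only genuinely nontrivial input is Theorem~\ref{multiplication}; once it is available, the remainder is soft. I expect the points needing care to be the reduction to cylinder functions (tracking the approximation constant through the bound $|\mu|\le 1$), the bookkeeping of the inclusion--exclusion, and the observation that the all-ones window must be absorbed by the prime number theorem rather than by Theorem~\ref{multiplication}. None of these should present a serious obstacle, since the essential content is entirely carried by Theorem~\ref{multiplication}.
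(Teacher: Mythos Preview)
Your proof is correct and follows essentially the same route as the paper: reduce to a spanning family by a Stone--Weierstrass/density argument, then expand via inclusion--exclusion to land on sets of the exact shape handled by Theorem~\ref{multiplication}, with the empty-conjunction term covered by Theorem~\ref{PNT}. The only cosmetic difference is that the paper works with products of coordinate projections $f_{n_1}\cdots f_{n_l}$ and expands $x(n+n_i)=1-(1-x(n+n_i))$, whereas you work directly with cylinder indicators and expand over the ``$1$'' coordinates; the resulting inclusion--exclusion identities are identical.
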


\begin{proof}
Let $$F=\{f_{n_1}f_{n_2}\cdots f_{n_l}: l\in\mathbb{N} \text{ and } n_1,n_2,\dots, n_l\in \mathbb{Z}\} \cup\{f_{\text{const}}\}$$ where $f_n:X\rightarrow \mathbb{R}$ is the projection onto the $n$-th coordinate and $f_{\text{const}}:X\rightarrow \mathbb{R}$ is the constant function $f_{\text{const}}(x)=1$. Note that $F$ separates points since for every $x,y\in X$ with $x\neq y$ there exists $n\in \mathbb{Z}$ such that $f_n(x)=x(n)\neq y(n)=f_n(y)$. Since $F$ contains a non-zero constant function, by the Stone--Weierstrass Theorem the algebra generated by functions in $F$ is dense in the space of all continuous functions on $X$ with the uniform convergence topology. Furthermore, since Sarnak's property is closed under taking the limit with uniform convergence topology, it is enough to show Sarnak's property for every continuous function in the algebra (closed under taking linear combinations and multiplication) generated by $F$. Note that since $F$ is closed under multiplication, the algebra generated by $F$ is equal to
\[
\{c_1g_1+\cdots+c_lg_l : l\in \mathbb{N}, c_1,c_2,\dots,c_l\in\mathbb{R}, \mbox{ and } g_1,g_2,\dots,g_l\in F\}.
\]

We next show Sarnak's property for $f=f_{n_1}f_{n_2}\cdots f_{n_l}\in F$. Let $x\in X$. Then  $M_x=\{n\in\mathbb{Z}: x(n)=0\}$ is an accc. For any subset $I\subseteq \{1, \dots, l\}$, say $I=\{i_1,\dots, i_p\}$, and for $N\in \mathbb{N}$, we have
\[
\frac{1}{N}\sum_{\substack{n+n_{i_1},\dots,n+n_{i_p}\in M_x\\ 1\leq n\leq N}} \mu(n)
=\frac{1}{N}\sum_{1\leq n\leq N} \mu(n) (1-x(n+n_{i_1}))\cdots (1-x(n+n_{i_p})),
\]
which approaches 0 as $N\to\infty$ by Theorem \ref{multiplication}. Now observe
\begin{align*}
&\frac{1}{N}\sum_{1\leq n\leq N} \mu(n) f_{n_1}(T^nx)f_{n_2}(T^nx)\cdots f_{n_l}(T^nx)\\ =&\frac{1}{N}\sum_{1\leq n\leq N} \mu(n) x(n+n_1)x(n+n_2)\cdots x(n+n_l)\\=&\frac{1}{N}\sum_{1\leq n\leq N} \mu(n) (1-(1-x(n+n_1)))\cdots (1-(1-x(n+n_l))) \\
=&\sum_{\substack{I\subseteq\{1,\dots, l\} \\ I=\{i_1,\dots, i_p\} }}(-1)^p\frac{1}{N}\sum_{1\leq n\leq N} \mu(n)(1-x(n+n_{i_1}))\cdots (1-x(n+n_{i_p})).
\end{align*}
Thus, by applying Theorem \ref{multiplication} as above $2^l$-many times, we get $$\frac{1}{N}\sum_{1\leq n\leq N} \mu(n) f_{n_1}(T^nx)f_{n_2}(T^nx)\cdots f_{n_l}(T^nx)\rightarrow 0.$$

Finally, we show Sarnak's property for $f=c_1g_1+\cdots+c_lg_l$ assuming that each $g_i$ satisfies Sarnak's property. Here, for every $\epsilon>0$ there exists $N_0\in \mathbb{N}$ such that for every $N\geq N_0$ we have
\[
\Bigr\lvert\frac{1}{N}\sum_{n\leq N} \mu(n) f(T^nx)\Bigr\rvert= \Bigr\lvert\sum_{i=1}^l c_i\frac{1}{N}\sum_{n\leq N} \mu(n) g_i(T^nx)\Bigr\rvert \leq \left(\sum_{i=1}^l \lvert c_i\rvert\right) \epsilon.
\]
\end{proof}

\begin{cor}
Let $(X,\mu,T)$ be a symbolic rank-one transformation that is measure theoretically isomorphic to an odometer. Then the rank-one subshift $(X, T)$ is M\"{o}bius disjoint.
\end{cor}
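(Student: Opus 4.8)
The plan is to derive the corollary from Theorem \ref{acccsubshift}: it suffices to show that a symbolic rank-one transformation $(X,\mu,T)$ which is measure-theoretically isomorphic to an odometer is automatically an accc rank-one subshift, i.e. that $M_x=\{n\in\mathbb{Z}:x(n)=0\}$ is an accc for every $x\in X$. Once this is known, Theorem \ref{acccsubshift} immediately yields M\"{o}bius disjointness.

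First I would unwind Theorem \ref{Getal}. Since $(X,\mu,T)$ is isomorphic to an odometer, the characterization applies, and I take $l=0$ in clause (b). As clause (b) mentions neither $n$ nor $\eta$, it gives the following clean statement: for every $\epsilon>0$ there exist $k\in\mathbb{N}$ and $N_0$ such that for every $m\geq N_0$ there is a set $D\subseteq\mathbb{Z}/k\mathbb{Z}$ with
\[
\frac{\lvert\{i\leq|v_m|:[i]_k\in D\}\triangle I_{0,m}\rvert}{|I_{0,m}|}<\epsilon,
\]
where $I_{0,m}$ is the set of positions of the $0$s in $v_m$. This is exactly an approximation of $I_{0,m}$ by a union of congruence classes, so the natural candidate for the building block in the definition of accc is $A=I_{0,m}$, which contains $0$ because every $v_m$ begins with $0$.

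This congruence estimate transfers to the accc bound with essentially no loss. Writing $\max(A)=\max(I_{0,m})\leq|v_m|$, one checks that
\[
\{0\leq n\leq\max(A):[n]_k\in D\}\triangle A\ \subseteq\ \{i\leq|v_m|:[i]_k\in D\}\triangle I_{0,m},
\]
since restricting the congruence set to $[0,\max(A)]$ only removes elements while $I_{0,m}\subseteq[0,\max(A)]$ is unaffected. Together with the trivial inequality $|I_{0,m}|\leq\max(I_{0,m})+1$, the symmetric difference is bounded by $\epsilon(\max(A)+1)\leq 2\epsilon\max(A)$; hence, applying Theorem \ref{Getal} with $\epsilon/2$ in place of $\epsilon$ produces exactly the inequality required in $P(M_x,\epsilon,k)$. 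Moreover $|I_{0,m}|=\prod_{j<m}r_j\to\infty$, so $\max(I_{0,m})\to\infty$ and building blocks with $\max(A)\geq N$ are available for every $N$.

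The main obstacle is the structural input that $A=I_{0,m}$ is genuinely a building block of $M_x$, for every $x\in X$ and every large $m$: that $M_x=\bigcup_i(I_{0,m}+c_i)$ for a non-decreasing family of occurrence positions $c_i$ whose consecutive gaps exceed $\max(I_{0,m})$ (equal gaps being allowed to accommodate one-sided points). For $x=V$ this is immediate from \eqref{GG}, since consecutive copies of $v_m$ start at distance at least $|v_m|>\max(I_{0,m})$. For a general $x$ I would invoke the rank-one structure theory underlying \cite{GS}: every point of $X_V$ is tiled by occurrences of $v_m$ with the uncovered positions being spacer $1$s, so each $0$ of $x$ lies in a unique copy of $v_m$ and $M_x$ is exactly the union of the corresponding translates of $I_{0,m}$. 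The points requiring care are the degenerate ones, where the tiling is one-sided or where $x$ might a priori carry only finitely many $0$s; here I would argue that a $0$ preceded (respectively followed) by an arbitrarily long run of $1$s must be the first (respectively last) $0$ of a high-level block, which by the nested structure forces further $0$s nearby, so that $M_x$ is either empty (trivially an accc) or infinite with the stated building-block form. Granting this description, the estimate above establishes $P(M_x,\epsilon,k)$ for every $x\in X$, so $(X,T)$ is an accc rank-one subshift and Theorem \ref{acccsubshift} completes the proof.
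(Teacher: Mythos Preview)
Your approach is correct and is exactly the route the paper takes: the paper's proof is the single sentence ``This is a direct consequence of Theorem \ref{Getal} Clause (b) and Theorem \ref{acccsubshift},'' relying on the assertion (made in the introduction just after Theorem \ref{Getal}) that the odometer hypothesis forces $M_x$ to be an accc for every $x\in X_V$. You have simply unpacked that assertion in detail---taking $l=0$ in clause (b), identifying $A=I_{0,m}$ as the building block, and checking the denominator change from $|I_{0,m}|$ to $\max(A)$---which the paper leaves implicit.
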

\begin{proof}
This is a direct consequence of Theorem \ref{Getal} Clause (b) and Theorem \ref{acccsubshift}.
\end{proof}

\section{Generalized Katok's Maps}
In this section we verify Sarnak's conjecture for a class of rank-one subshifts
which generalize Katok's map studied in \cite{ALR}. We first define this class. Recall that a generating sequence $\{v_n\}_{n\in\mathbb{N}}$ of a rank-one subshift is defined recursively from the cutting parameter $\{r_n\}_{n\in\mathbb{N}}$ and the spacer parameter $\{s_{n, i}\}_{n\in\mathbb{N},0< i\leq r_n}$ by $v_0=0$ and $$v_{n+1}=v_n1^{s_{n,1}}v_n1^{s_{n,2}}\cdots v_n1^{s_{n, r_n}}$$ for $n\in \mathbb{N}$. For each integer $m\geq 2$, let $\mathcal{K}_m$ be the set of all infinite rank-one words $V\in 2^\mathbb{N}$ with generating sequences $\{v_n\}_{n\in\mathbb{N}}$ such that there are natural numbers $0\leq t_{n,1},t_{n,2},\dots,t_{n,m}\leq m-1$ for each $n\in\mathbb{N}$, satisfying \vskip 3pt
\begin{enumerate}
\item $r_n$ is divisible by $m$. \vskip 3pt
\item $s_{n,i}=\displaystyle{t_{n,\lceil\frac{m}{r_n}i\rceil}}$ for $0<i\leq r_n$. \vskip 3pt
\item $\displaystyle\liminf_{n\rightarrow \infty}\frac{\log\log(r_n)}{\log\log\log(|v_n|)}\geq 2$.\vskip 6pt
\end{enumerate}
Note that the original Katok's map is a special case in $\mathcal{K}_2$, and Condition (3) is weaker than the condition in \cite{ALR} which requires $\lim_{n\rightarrow \infty}r_n/|v_n|=+\infty$. Let $\mathcal{K}=\bigcup_{m\geq 2}\mathcal{K}_m$. We show Sarnak's conjecture for $(X_V,T)$ for all $V\in \mathcal{K}$.

\begin{thm}\label{GKatok} Let $V\in \mathcal{K}$, $x\in X_V$, and $n_1,n_2,\dots,n_l$ be integers.
Let $M_x=\{n\in \mathbb{Z} :  x(n)=0\}$ and
\[
M'_x=\{n\in \mathbb{Z} \ : \ n+n_1,n+n_2,\dots,n+n_l\in M_x\}.
\]
Then $M_x'$ is orthogonal to the M\"{o}bius function.
\end{thm}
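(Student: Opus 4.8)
The plan is to exploit the locally periodic structure of points of $X_V$, applying Theorem \ref{KLRadziwill} separately on each ``period scale'' present at a fixed level, and to use Condition (3) in the definition of $\mathcal{K}_m$ to absorb the resulting boundary errors. Fix $\epsilon\in(0,\tfrac{1}{100})$ and set $\delta=\max_{1\le\lambda\le l}|n_\lambda|$. The key structural fact is that, for each level $n$, the word $v_{n+1}$ is the concatenation of $m$ blocks $(v_n1^{t_{n,j}})^{r_n/m}$ for $1\le j\le m$, and the $j$-th block is \emph{exactly periodic with period} $P_j:=|v_n|+t_{n,j}$. Hence, apart from the incomplete blocks at the two ends of $[1,N]$ (of total length $\le 2|v_{n+1}|$) and the irregular spacers that occur between consecutive copies of $v_{n+1}$, the interval $[1,N]$ is partitioned into type-$j$ blocks, on the interior of each of which $x$ is periodic with period $P_j$; moreover on such an interior, away from a strip of width $\delta$ where some shift $n+n_\lambda$ might leave the block, the set $M'_x$ coincides with a union of residue classes modulo $P_j$.

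For a suitably large fixed level $n$, chosen below, I would apply Theorem \ref{KLRadziwill} once for each $j$ with $q=P_j$. This requires an $L=L_j\ge L_0$ satisfying (\ref{longap}); since $P_j$ is fixed and $\sum_{p\le L}1/p\to\infty$ such an $L_j$ exists, and the object is to take it as small as possible. Using the bound $\sum_{p\mid q}1/p\le\log\log\log q+O(1)$ (with equality on primorials, by Mertens' theorem) together with $P_j\le|v_n|+m$, one may take $L_j$ of size about $\exp\!\big((\log\log|v_n|)^{1/(1-\epsilon)}\big)$. Theorem \ref{KLRadziwill} then provides, for every $N\ge N_0(P_j,L_j)$, a shift $z_j$ for which the left-hand side of its conclusion, with these values of $q$ and $L$, is at most $\epsilon N$.

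I would then estimate $\sum_{n\in M'_x\cap[1,N]}\mu(n)$ block by block. On a single type-$j$ block $B$, write $M'_x\cap B$ as a union over the active residue classes $a$ modulo $P_j$ and split $B$ along the windows $[z_j+j'L_jP_j,\,z_j+(j'+1)L_jP_j)$: the windows lying entirely inside $B$ are governed by the M\"{o}bius sums appearing in Theorem \ref{KLRadziwill}, while the at most two windows meeting the boundary of $B$ contribute at most $L_j$ per active residue, since a window of length $L_jP_j$ contains only $L_j$ integers in a fixed class. Summing the interior contributions over all type-$j$ blocks yields at most $\epsilon N$, because the windows involved are pairwise disjoint and form a subfamily of those bounded by Theorem \ref{KLRadziwill}; over all $j$ this is at most $m\epsilon N$. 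The boundary windows contribute at most $\sum_{j}(\#\{\text{type-}j\ \text{blocks}\})\cdot 2L_j\cdot|I_{0,n}|\lesssim mL\,N/r_n$ (with $L:=\max_jL_j$), using that there are $\approx N/|v_{n+1}|\approx N/(r_n|v_n|)$ blocks of each type and at most $|I_{0,n}|\le|v_n|$ active classes per block; the width-$\delta$ strips, the irregular spacers, and the two end blocks each contribute $\le\epsilon N$ once $n$ and $N$ are large.

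The crux, and the only essential use of the hypotheses defining $\mathcal{K}_m$, is to force the term $mL\,N/r_n$ below $\epsilon N$, that is, to secure $r_n\ge mL/\epsilon$ for all large $n$. Here $\log L_j\approx(\log\log|v_n|)^{1/(1-\epsilon)}$, whereas Condition (3) gives $\log\log r_n\ge(2-o(1))\log\log\log|v_n|$, i.e.\ $\log r_n\ge(\log\log|v_n|)^{2-o(1)}$; since $2>1/(1-\epsilon)$ for $\epsilon<\tfrac12$, this yields $L_j=o(r_n)$ and hence the desired inequality for all large $n$. Fixing such an $n$ and letting $N\to\infty$ gives $\limsup_{N}\frac1N\big|\sum_{n\in M'_x\cap[1,N]}\mu(n)\big|\le(m+2)\epsilon$, and since $\epsilon$ is arbitrary while $m$ is fixed, $M'_x$ is orthogonal to the M\"{o}bius function. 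I expect the genuinely delicate point to be exactly this calibration of the KLR parameter $L_j$ against the cutting parameter $r_n$ through the maximal order of $\sum_{p\mid q}1/p$; the rest is the same kind of lengthy but routine boundary bookkeeping as in the proof of Theorem \ref{multiplication}.
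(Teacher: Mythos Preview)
Your proposal is correct and follows essentially the same route as the paper: decompose $v_{n+1}$ into $m$ periodic sub-blocks with periods $P_j=|v_n|+t_{n,j}$, apply Theorem~\ref{KLRadziwill} once for each $j$ with $q=P_j$, and use Condition~(3) together with the bounds $\sum_{p\mid q}1/p\le\log\log\log q+O(1)$ and $\sum_{p\le L}1/p=\log\log L+O(1)$ to calibrate $L$ against $r_n$ so that the boundary-window errors are $\le\epsilon N$. The only cosmetic difference is the direction of this calibration---the paper fixes $L_n=\epsilon r_n/(4m)$ and then verifies (\ref{longap}), whereas you take the minimal $L_j$ satisfying (\ref{longap}) and then verify $L_j=o(r_n)$---and both arrive at the same $(m+2)\epsilon N$ bound.
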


\begin{proof}
Fix $m\geq 2$ such that $V\in\mathcal{K}_m$. We may assume $M_x$ is nonempty since otherwise $M'_x$ is the empty set and therefore orthogonal to the M\"{o}bius function. By Condition (3) of the definition of $\mathcal{K}_m$, there exists $N_0\in \mathbb{N}$ such that for every $n\geq N_0$ we have $\log\log(r_n)\geq 2\log\log\log(|v_n|)$. For each $n\in\mathbb{N}$, let $A_n=\{0\leq i<|v_n|: v_n(i)=0\}$. Then $0\in A_n$, $A_n$ is a building block of $M_x$ and we have 
\[
\max(A_n)= |v_n|-t_{n-1,m}-t_{n-2,m}-\cdots-t_{0,m}-1=|v_n|-O_m(1).
\]
For every $n\in\mathbb{N}$ fix a non-decreasing sequence $\{a_{j,A_n}\}_{j\in\mathbb{Z}}$ of integers such that ${M_x=\bigcup_{j\in\mathbb{Z}} (A_{n}+a_{j,A_n})}$ and that $a_{j+1,A_n}-a_{j,A_n}>\max(A_n)$ or $a_{j+1,A_n}=a_{j,A_n}$, for every $j\in \mathbb{Z}$.

Let $\delta=\max(\lvert n_1\rvert,\dots,\lvert n_l\rvert)$. For a moment, fix $j\in\mathbb{Z}$ and $n\in\mathbb{N}$, and consider the set $A_{n+1}+a_{j,A_{n+1}}$, which is one of the translations of the building block $A_{n+1}$ in $M_x$. By the definition of $\mathcal{K}_m$, we can write
$$ v_{n+1}=(v_n1^{t_{n,1}})^{\frac{r_n}{m}}(v_n1^{t_{n,2}})^{\frac{r_n}{m}}\cdots (v_n1^{t_{n,m}})^{\frac{r_n}{m}}=u_{n,1}u_{n,2}\cdots u_{n,m} $$
where $u_{n,k}=(v_n1^{t_{n,k}})^{\frac{r_n}{m}}$ for $1\leq k\leq m$. For each $0\leq \ell< m$, let $\sigma_{n,\ell}$ be the starting position of $u_{n,\ell+1}$ in $v_{n+1}$, that is,
$$ \sigma_{n,\ell}=(|v_n|+t_{n,1}+\cdots+|v_n|+t_{n,\ell})\frac{r_n}{m}, $$
and let $\sigma_{n,m}=|v_{n+1}|$. We say $B\subseteq \mathbb{Z}$ is a {\it complete congruency class mod $q$} (ccc mod $q$) on an interval $[a,b)$ if for every $n\in [a,b-q)$, $n\in B$ iff $n+p\in B$. Then for each $0\leq \ell<m$, $A_{n+1}$ is a ccc mod $|v_n|+t_{n,\ell+1}$ on $[\sigma_{n,\ell}, \ \sigma_{n,\ell+1})$, and $A_{n+1}+a_{j, A_{n+1}}$ is a ccc mod $|v_n|+t_{n,\ell+1}$ on $[a_{j, A_{n+1}}+\sigma_{n,\ell}, \ a_{j, A_{n+1}}+\sigma_{n,\ell+1})$. Since $M_x\cap [a_{j, A_{n+1}},\ a_{j+1, A_{n+1}})=A_{n+1}+a_{j, A_{n+1}}$, it follows that $M_x$ is a ccc mod $|v_n|+t_{n,\ell+1}$ on $[a_{j, A_{n+1}}+\sigma_{n,\ell},\ a_{j, A_{n+1}}+\sigma_{n,\ell+1})$, and that $M'_x$ is a ccc mod $|v_n|+t_{n,\ell+1}$ on $[a_{j, A_{n+1}}+\sigma_{n,\ell}+\delta, \ a_{j, A_{n+1}}+\sigma_{n,\ell+1}-\delta)$.

For any $n\in\mathbb{N}$ and $0\leq \ell<m$, let
$$ C_{n,\ell}=\displaystyle\bigcup_{j\in\mathbb{Z}}\left(M'_x\cap[a_{j,A_{n+1}}+\sigma_{n,\ell}+\delta, \ a_{j,A_{n+1}}+\sigma_{n, \ell+1}-\delta)\right), $$
$q_{n,\ell}=|v_n|+t_{n,\ell+1}$, and $\lambda_{n,\ell}=q_{n,\ell}\frac{r_n}{m}-2\delta$. Then each $C_{n,\ell}$ is a union of arithmetic progressions each of which has approximately $r_n/m$-many terms of common difference $q_{n,\ell}$, contained within an interval of length no longer than $\lambda_{n,\ell}$.
Let $C_n=\bigcup_{\ell=0}^{m-1}C_{n,\ell}$. We will find large enough $n$ to apply Theorem \ref{KLRadziwill} to each $C_{n,\ell}$.

Fix $0< \epsilon< \frac{1}{100}$ and a bound $L_0$ corresponding to $\epsilon$ given by Theorem \ref{KLRadziwill}. For each $n\geq N_0$, we first find $L_n\geq L_0$ such that for all $0\leq \ell<m$,
\[
\sum_{\substack{p\vert q_{n,\ell}\\ p \text{ prime}}}\frac{1}{p}\leq\  (1-\epsilon)\!\!\sum_{\substack{p\leq L_n\\ p \text{ prime}}}\frac{1}{p}.
\]
By \cite{KLR} Lemma 3.2 we have
\[
\sum_{\substack{p\vert q\\ p \text{ prime}}}\frac{1}{p}\leq \log\log\log(q)+O(1)
 \]
and from \cite{Apo} Theorem 4.12 we have
\[ \sum_{\substack{p\leq L\\ p \text{ prime}}}\frac{1}{p}=\log\log(L)+O(1).
\]
Since $\log\log\log(q_{n,\ell})=\log\log\log(|v_n|)+O(1)$ and $\log\log(r_n)\geq 2\log\log\log(|v_n|)$, there exists $N_1\geq N_0$ such that, for every $n\geq N_1$, we have $|v_n|\geq m$ and, setting $L_n=\epsilon r_n/4m$, $L_n\geq L_0$. Now $\log\log(L_n)=\log\log(r_n)+O_\epsilon(1)$ and
\[
\sum_{\substack{p\vert q_{n,\ell}\\ p \text{ prime}}}\frac{1}{p}\leq \ (1-\epsilon)\!\!\sum_{\substack{p\leq L_n\\ p \text{ prime}}}\frac{1}{p}
\]
for all $0\leq \ell<m$. Note that $L_n\geq L_0$ but
$$L_nq_{n,\ell}\leq\epsilon \displaystyle\frac{r_n}{4m} (|v_n|+m)\leq \epsilon \frac{r_n|v_n|}{2m}\leq \epsilon \frac{|v_{n+1}|}{2m}$$
for all $0\leq \ell<m$.

Finally let $L=L_{N_1}$ and let
$$ N_2=\max\{N_1, N_0(q_{N_1,\ell}, L): 0\leq \ell<m\}$$
where $N_0(\cdot, \cdot)$ is given by Theorem \ref{KLRadziwill}. Denote $H=|v_{N_1+1}|$ and let $N\geq 2N_2H/\epsilon>N_2$. By Theorem \ref{KLRadziwill}, for each $p=0,\dots, m-1$ and $q=|v_{N_1}|+p$ we have
\[
\displaystyle\sum_{j=0}^{N/Lq}\sum_{a=0}^{q-1}\Bigr\lvert \sum_{\substack{i\in[z_q+jLq,z_q+(j+1)Lq)\\i\equiv a \bmod{q}}}\mu(i) \Bigr\rvert\leq\epsilon N.
\]
On the other hand, let $s=\min\{j\in\mathbb{Z}: 1\leq a_{j, A_{N_1+1}}\leq N\}$, $r=\max\{j\in\Z: 1\leq a_{j, A_{N_1+1}}\leq N\}$, $E_\ell=C_{N_1, \ell}\cap [a_{s, A_{N_1+1}},\ a_{r, A_{N_1+1}})$, and $E=\bigcup_{\ell=0}^{m-1}E_{\ell}$. Then the sum
$$ \displaystyle\sum_{q=|v_{N_1}|}^{|v_{N_1}|+m-1}\ \sum_{j=0}^{N/Lq}\sum_{a=0}^{q-1}\Bigr\lvert \sum_{\substack{i\in[z_q+jLq,z_q+(j+1)Lq)\\i\equiv a \bmod{q}}}\mu(i) \Bigr\rvert
$$
is an over-estimate of
$$ \Bigr\lvert\displaystyle\sum_{\substack{i\in E \\ 1\leq i\leq N}}\mu(i)\Bigr\rvert $$
with an error no bigger than
\[ m(r-s)2L(|v_{N_1}|+m)\leq m\displaystyle\frac{N}{H}2\epsilon\frac{H}{2m}=\epsilon N.
\]
To justify this error estimate, note that each application of Theorem \ref{KLRadziwill} with $q=|v_{N_1}|+p$ where $p=q_{N_1,\ell}$ gives an over-estimate of
$$ \Bigr\lvert \displaystyle\sum_{\substack{i\in E_p \\ 1\leq i\leq N}}\mu(i)\Bigr\rvert $$
with an error occurring near each end of the interval
$$[a_{j,A_{N_1+1}}+\sigma_{N_1,\ell}+\delta,\ a_{j, A_{N_1+1}}+\sigma_{N_1, \ell+1}-\delta) $$
within an interval of length $Lq_{N_1,\ell}$.

Finally, since $|(M'_x\cap [1, N])\triangle E|\leq 2H\leq \epsilon N$, we obtain
\[
\Bigr\lvert\sum_{\substack{i\in M'_x\\ 1\leq i\leq N}} \mu(i)\Bigr\rvert \leq \epsilon N +\Bigr\lvert\displaystyle\sum_{\substack{i\in E \\ 1\leq i\leq N}}\mu(i)\Bigr\rvert \leq \epsilon N+m\epsilon N+\epsilon N=(m+2)\epsilon N.
\]
\end{proof}

With an argument similar to the proof of Theorem \ref{acccsubshift}, we obtain the following corollary of Theorem \ref{GKatok}.
\begin{cor}
Let $V\in\mathcal{K}$. Then $(X_V, T)$ is M\"{o}bius disjoint.
\end{cor}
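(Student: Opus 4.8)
The plan is to mirror the proof of Theorem~\ref{acccsubshift} verbatim, substituting Theorem~\ref{GKatok} for Theorem~\ref{multiplication} at the single point where orthogonality to the M\"{o}bius function is invoked. The structural reason this works is that Theorem~\ref{GKatok} delivers its conclusion in exactly the form the earlier argument consumes: orthogonality of the shifted set $M'_x$ for an arbitrary finite tuple of integer shifts.

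First I would introduce the separating family
\[
F=\{f_{n_1}f_{n_2}\cdots f_{n_l}: l\in\mathbb{N},\ n_1,\dots,n_l\in\mathbb{Z}\}\cup\{f_{\text{const}}\},
\]
where $f_n$ is projection onto the $n$-th coordinate and $f_{\text{const}}\equiv 1$. As in the accc case, $F$ separates points of $X_V$ and contains a nonzero constant, so by Stone--Weierstrass the algebra it generates is dense in $C(X_V)$ in the uniform topology. Since Sarnak's property is closed under uniform limits and under linear combinations, and since $F$ is closed under multiplication (so its generated algebra consists of linear combinations of members of $F$), it suffices to verify Sarnak's property for a single product $f=f_{n_1}\cdots f_{n_l}\in F$ and for $f_{\text{const}}$.

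For the product $f$, I would fix $x\in X_V$ and expand each factor via $x(n+n_i)=1-(1-x(n+n_i))$, noting that $1-x(n+n_i)$ is the indicator of $n+n_i\in M_x$. Multiplying out yields, over subsets $I=\{i_1,\dots,i_p\}\subseteq\{1,\dots,l\}$, a signed sum of the averages
\[
\frac{1}{N}\sum_{\substack{n+n_{i_1},\dots,n+n_{i_p}\in M_x\\ 1\leq n\leq N}}\mu(n),
\]
each of which is precisely an average of $\mu$ over the set $M'_x$ attached to the shifts $n_{i_1},\dots,n_{i_p}$. By Theorem~\ref{GKatok} each such $M'_x$ is orthogonal to the M\"{o}bius function, so all $2^l$ terms vanish in the limit and hence so does the average for $f$. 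The constant-function summand is handled directly by Landau's theorem (Theorem~\ref{PNT}), and the passage to arbitrary finite linear combinations is the routine triangle-inequality estimate already recorded at the end of the proof of Theorem~\ref{acccsubshift}.

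I do not expect a genuine obstacle at this stage: all the combinatorial content specific to the generalized Katok words---the decomposition into the blocks $u_{n,k}$, the complete-congruency-class structure on each subinterval, and the tuning of $L_n$ against Condition~(3)---has already been absorbed into the proof of Theorem~\ref{GKatok}. The only point requiring care is to confirm that the inclusion--exclusion bookkeeping is identical to the accc setting, which it is, since the reduction uses nothing about $M_x$ beyond the orthogonality statement supplied by Theorem~\ref{GKatok}.
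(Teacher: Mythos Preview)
Your proposal is correct and matches the paper's own approach exactly: the paper simply states that the corollary follows by repeating the argument of Theorem~\ref{acccsubshift} with Theorem~\ref{GKatok} in place of Theorem~\ref{multiplication}, which is precisely what you outline. There is nothing to add.
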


\section{Further Generalizations}
In this last short section we note that the results in Section 3 can be generalized further. We use the same notation from previous sections for rank-one subshifts. In particular, let $A_n=\{0\leq i\leq |v_n|-1:v_n(i)=0\}$. In general, our techniques can only be applied in case there exists $m\in\mathbb{N}$ such that $A_n$ for arbitrarily large $n$ can be approximated with a union of long arithmetic progressions with at most $m$-many common differences. In order to apply Theorem \ref{KLRadziwill}, these common differences and the lengths of the arithmetic progressions need to be constrained by (\ref{longap}), which usually results in some growth conditions on the cutting parameter and moderation (or bounded) conditions on the spacer parameter of the rank-one subshift.

Here we specify one concrete class of rank-one subshifts that is broader than $\mathcal{K}$ and satisfies Sarnak's conjecture. Define
\[
C_n=\{1, r_n\}\cup\{2\leq i\leq r_n-1 : s_{n,i-1}\neq s_{n,i}\}
\]
and enumerate the members of $C_n$ in increasing order as $c_{n,1},c_{n,2},\dots,c_{n,p_n}$. The arguments in Section 3 can be repeated to show Sarnak's conjecture for $(X_V,T)$ under the following conditions:
\begin{enumerate}
\item[(i)] $\displaystyle\lim_{n\rightarrow \infty}\frac{\log\log(r_n/p_n)}{\log\log(r_n)}=1$, \vskip 3pt
\item[(ii)] $\displaystyle K=\limsup_{n\rightarrow\infty}\frac{\sum_{i=1}^{r_n}s_{n,i}}{r_n|v_n|}<+\infty$, \vskip 3pt
\item [(iii)] $\displaystyle\limsup_{n\rightarrow \infty}\frac{\log\log(r_n)}{\log\log\log(|v_n|)}> 1$, and \vskip 6pt
\item [(iv)] there exists $m\in\mathbb{N}$ such that for every $\epsilon>0$ there exists $N\in\mathbb{N}$ such that for $n\geq N$ there exists $A\subseteq [1,r_n]$ with $\lvert A\rvert\geq (1-\epsilon)r_n$ and $\displaystyle\lvert\{ s_{n,a}:a\in A\}\rvert\leq m$.
\end{enumerate}

We sketch the proof to illustrate how the conditions are applied. Fix $0<\epsilon<\frac{1}{100}$. The strategy is to approximate $A_{n+1}$ (with a suitably defined, large enough $n$) with a union of arithmetic progressions with common difference $q_i=|v_n|+s_{n,c_{n,i}}$ and length $L_i=c_{n,i+1}-c_{n,i}$ such that (\ref{longap}) is satisfied. Note that condition (i) let us guarantee that $L_i\geq \sqrt{\frac{r_n}{p_n}}$ by removing at most $\epsilon |v_{n+1}|$ many points, that is, we may assume that $\log\log(r_n)-\log\log(L_i)=O(1)$. Condition (ii) let us guarantee $q_i\leq |v_{n}|^2$ by removing at most $\epsilon |v_{n+1}|$ many points, that is, we may assume $\log\log\log(|v_{n}|)-\log\log\log(q_i)=O(1)$. In light of conditions (i) and (ii), condition (iii) let us choose $n$ large enough such that (\ref{longap}) holds for every $i$ with $q=q_i$ and $L=L_i$. Finally, condition (iv) guarantees that there are at most $m$-many different values of $q_i$ (and therefore we only need to apply Theorem \ref{KLRadziwill} $m$-many times) by removing at most $\epsilon |v_{n+1}|$ many points.

Conditions (i)--(iv) define a class of rank-one subshifts that is more general than $\mathcal{K}$. In addition, they also include rank-one subshifts correspondent to certain flat stacks. A flat stack is a rank-one transformation $T$ on a probability measure space $(X,\mu)$ (see \cite{F}, Definition 2) with the extra condition that for every $\epsilon>0$ we can choose $F$ in Definition 2 in \cite{F} such that $\mu(T^hF\Delta F)\leq\epsilon \mu(F)$. In particular, if $(X_V,T)$ is a rank-one subbshift correspondent to a flat stack then
\[
\lim_{n\rightarrow \infty}\frac{\lvert\{1\leq i\leq r_n:s_{n,i}\neq 0\}\rvert}{r_n}=0.
\]
Thus,
it satisfies conditions (ii) and (iv).

\end{document}